\definecolor{uuuuuu}{rgb}{0.27,0.27,0.27}
\definecolor{sqsqsq}{rgb}{0.1255,0.1255,0.1255}
\newtheorem{definition}{Definition} [section]
\newtheorem{theorem}[definition]{Theorem}
\newtheorem{lemma}[definition]{Lemma}
\newtheorem{proposition}[definition]{Proposition}
\newtheorem*{claim*}{Claim}
\newtheorem{claim}[definition]{Claim}
\newtheorem{fact}[definition]{Fact}
\begin{document}
\title{\bf\Large Tur\'{a}n problems in pseudorandom graphs}

\date{\today}

\author{
Xizhi Liu
\thanks{Mathematics Institute and DIMAP,
            University of Warwick,
            Coventry, CV4 7AL, UK.
Email: xizhi.liu@warwick.ac.uk.
Research was supported by ERC Advanced Grant 101020255.
}
\and
Dhruv Mubayi
\thanks{Department of Mathematics, Statistics, and Computer Science, University of Illinois, Chicago, IL, 60607 USA.
Email: mubayi@uic.edu.
Research partially supported by NSF awards DMS-1763317, 1952767, 2153576 and a Humboldt Research Award.}
\and
David Munh\'{a} Correia
\thanks{Department of Mathematics, ETH, 8092 Z\"{u}rich, Switzerland.
\newline
Email: david.munhacanascorreia@math.ethz.ch.}
}
\maketitle
%\footnote{footnote}
%%%%%%%%%%%%%%%%%%%%%%%%%%%%%%%%%%%%%%%%%%%%%%%%%
\begin{abstract}
Given a graph $F$, we consider the problem of determining the densest possible  pseudorandom graph that contains no copy of $F$. We provide an embedding procedure that improves a general result of Conlon, Fox, and Zhao which gives an upper bound on the density.
In particular, our result implies that optimally pseudorandom graphs with density greater than $n^{-1/3}$ must contain a copy of the Peterson graph, while the previous best result gives the bound $n^{-1/4}$. Moreover, we conjecture that the exponent $1/3$ in our bound is tight.
We also construct the densest known pseudorandom $K_{2,3}$-free graphs that are also triangle-free.
Finally, we obtain the densest known construction of clique-free pseudorandom graphs due to Bishnoi, Ihringer and Pepe in a novel way and give a different proof that they have no large clique.
\end{abstract}
%%%%%%%%%%%%%%%%%%%%%%%%%%%%%%%%%%%%%%%%%%%%%%%%%
\section{Introduction}\label{Pseudorandom:SEC:Introduction}
Given a family $\mathcal{F}$ of graphs we say a graph $G$ is $\mathcal{F}$-free if it does not contain any member in $\mathcal{F}$ as a subgraph.
A fundamental problem in extremal graph theory is to determine the maximum number $\mathrm{ex}(n, \mathcal{F})$ of edges in an $\mathcal{F}$-free graph on $n$ vertices.
Here $\mathrm{ex}(n, \mathcal{F})$ is called the \textit{Tur\'{a}n number} of $\mathcal{F}$, and the limit $\pi(F) = \lim_{n\to \infty} \mathrm{ex}(n, \mathcal{F})/\binom{n}{2}$, whose existence was proved by Katona, Nemetz, and Simonovits~\cite{KNS64}, is called the \textit{Tur\'{a}n density} of $\mathcal{F}$.

For a graph $G$ we use $V(G)$ to denote the vertex set of $G$, and use $v(G)$ and $e(G)$ to denote the number of vertices and edges in $G$, respectively.
For a set $S \subset V(G)$ we use $e_{G}(S)$ to denote the number of edges in the induced subgraph $G[S]$.
Given two vertex sets $X, Y \subset V(G)$ we use $e_{G}(X,Y)$ to denote the number of edges in $G$ that have one vertex in $X$ and one vertex in $Y$
(here edges with both vertices in $X\cap Y$ are counted twice, hence $e_{G}(X,X) = 2e_{G}(X)$).
We will omit the subscript $G$ if it is clear from the context.

Informally, we say that a graph is pseudorandom if its edge distribution behaves like a random graph.
In this note we use the following notation, which was firstly introduced by Thomason in his fundamental papers~\cite{Thomason87a, Thomason87b}, to quantify the randomness of a graph.

%\begin{definition}\label{DFN:jumbled}
For two real numbers $p\in [0,1]$ and $\alpha \ge 0$, we say a graph $G$ is \textit{$(p,\alpha)$-jumbled} if it satisfies
\begin{align}\label{equ:jumble}
\left|e(X,Y)-p|X||Y|\right| \le \alpha\sqrt{|X||Y|}
\end{align}
for all $X,Y\subset V(G)$.
%\end{definition}

A special family of $(p,\alpha)$-jumbled graphs are the well-known \textit{$(n,d,\lambda)$-graphs}.
A graph $G$ is an $(n,d,\lambda)$-graph if it is a $d$-regular graph on $n$ vertices and the second largest eigenvalue in absolute value of
its adjacency matrix is $\lambda$.
The well-known Expander mixing lemma (e.g. see~\cite[Theorem 2.11]{KS06}) implies that an $(n,d,\lambda)$-graph is $(d/n, \lambda)$-jumbled.
Conversely, Bilu and Linial~\cite{BL06} proved that an $n$-vertex $d$-regular $(p,\alpha)$-jumbled graph is an $(n, d, \lambda)$-graph
with $\lambda = O(\alpha \log(d/\alpha))$.

It is known that a random graph $G(n,p)$ is almost surely a $(p,\alpha)$-jumbled graph with $\alpha = O(\sqrt{np})$ (see e.g.~\cite[Corollary~2.3]{KS06}).
The proof of Erd\H{o}s and Spencer in~\cite{ES71} can be extended to show that every $(p,\alpha)$-jumbled graph on $n$ vertices satisfies that $\alpha = \Omega(\sqrt{np})$ (see e.g.~\cite{BS06,KS06}), and,
in particular, $\lambda = \Omega(\sqrt{d})$ for an $(n,d,\lambda)$-graph.
Therefore, an $n$-vertex $(p,\alpha)$-jumbled graph with $\alpha = \Theta(\sqrt{np})$ can be viewed as optimally pseudorandom.
The tightness of the bound $\lambda = \Omega(\sqrt{d})$ in general is also witnessed by many well-known explicit constructions.
For example, the well-known triangle-free $(n,d,\lambda)$-graph constructed by Alon~\cite{Alon94} satisfies  $d = \Theta(n^{2/3})$ and $\lambda = O(\sqrt{d})$.

Constructions of dense pseudorandom graphs that avoid a certain graph as a subgraph are extremely useful for many problems.
In particular, the second author and Versta\"{e}te~\cite{MV19} recently showed that for every fixed integer $t\ge 3$,
the existence of $K_t$-free $(n,d,\lambda)$-graphs with $d = \Omega(n^{1-\frac{1}{2t-3}})$ and $\lambda = O(\sqrt{d})$
implies the lower bound $R(t,n) = \Omega^{\ast}(n^{t-1})$ for the off-diagonal Ramsey numbers, and this matches the best known upper bound in exponent.
More generally,~\cite{MV19} shows that the existence of dense $F$-free pseudorandom graphs implies a good lower bound for the Ramsey number $R(F, n)$.
This motivates us to consider the following pseudorandom version of the Tur\'{a}n problem.

Let $\mathcal{F}$ be a family of graphs and $C>0$ be a real number.
Let $\mathrm{ex}_{\mathrm{rand}}(n,C,\mathcal{F})$ be the maximum number of edges in an $n$-vertex $(p,\alpha)$-jumbled $\mathcal{F}$-free graph with $\alpha \le C\sqrt{np}$.
Note that in the definition of $\mathrm{ex}_{\mathrm{rand}}(n,C,\mathcal{F})$ we do not have any restriction on $p$.

In many applications, it suffices to know the exponent of $\mathrm{ex}_{\mathrm{rand}}(n,C,\mathcal{F})$. So we let
\begin{align*}
    \mathrm{exp}(\mathcal{F})
    = \lim_{C\to \infty}\limsup_{n\to \infty} \frac{\log \left(\mathrm{ex}_{\mathrm{rand}}(n,C,\mathcal{F})/n\right)}{\log n}.
\end{align*}
In other words, \textit{$\mathrm{exp}(\mathcal{F})$} is the supremum of $\beta$
such that there exist a constant $C$ and a sequence $\left(G_n\right)_{n=1}^{\infty}$ of $\mathcal{F}$-free $(p_n,\alpha_n)$-jumbled graphs with
\begin{align}
\lim_{n\to \infty}v(G_n) = \infty, \quad
\lim_{n\to \infty}\frac{\log (p_n v(G_n))}{\log v(G_n)} \ge \beta, \quad \text{and}\quad
\alpha_n \le C \sqrt{p_n v(G_n)}. \notag
\end{align}

Using the Expander mixing lemma one can prove that for every integer $t\ge 3$
we have $\mathrm{exp}(K_t) \le 1 - \frac{1}{2t-3}$.
Alon's construction~\cite{Alon94} shows that this bound is tight for $t=3$, that is, $\mathrm{exp}(K_3) = \frac{2}{3}$.
It is a major open problem to determine $\mathrm{exp}(K_t)$ in general.
Alon and Krivelevich proved in~\cite{AK97} that $\mathrm{exp}(K_t) \ge 1- \frac{1}{t}$.
Recently, Bishnoi, Ihringer, and Pepe~\cite{BIP20} improved their bound and proved the following result.

\begin{theorem}[Bishnoi--Ihringer--Pepe~\cite{BIP20}]\label{THM:BIP20}
Suppose that $t\ge 4$ is an integer.
Then  $\mathrm{exp}(K_t) \ge 1- \frac{1}{t-1}$.
\end{theorem}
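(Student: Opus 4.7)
The plan is to construct, for every prime power $q$, an $(n,d,\lambda)$-graph $G_q$ on $n = q^{t-1}$ vertices with $d = \Theta(q^{t-2})$, $\lambda = O(q^{(t-2)/2})$, and no copy of $K_t$. The Expander Mixing Lemma then implies that $G_q$ is $(p, O(\sqrt{pn}))$-jumbled with $p = d/n = \Theta(q^{-1})$, so plugging into the definition of $\mathrm{exp}(K_t)$ yields the desired lower bound $1 - 1/(t-1)$ on its exponent. The natural candidate for $G_q$ is a Cayley graph on $V = \mathbb{F}_q^{t-1}$ whose connection set $S$ is the set of $\mathbb{F}_q$-points of a well-chosen symmetric algebraic variety; the simplest choice is an affine quadric $S = \{v \in V \setminus \{0\} : Q(v) = c\}$ for a non-degenerate quadratic form $Q$ and a nonzero scalar $c$, with a Hermitian variety over $\mathbb{F}_{q^2}$ used as a fallback when parity or characteristic forces a quadric of the wrong type.

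Verifying the pseudorandom parameters is essentially routine. Counting $\mathbb{F}_q$-points of a smooth affine quadric in $t-1$ variables gives $|S| = (1+o(1))q^{t-2}$, so the graph is $\Theta(q^{t-2})$-regular. Because $G_q$ is a Cayley graph on the abelian group $V$, its eigenvalues are the Fourier coefficients $\widehat{\mathbf{1}}_S(\chi) = \sum_{v\in S}\chi(v)$ over additive characters $\chi$ of $V$. For nontrivial $\chi$ this is an exponential sum on a quadric, which by the standard Gauss-sum evaluation (or directly by Weil's bound) has modulus $O(q^{(t-2)/2}) = O(\sqrt{d})$. This establishes the required optimal pseudorandomness.

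The crux is the $K_t$-freeness. Suppose $v_1,\dots,v_t$ formed a clique. After translating so that $v_t = 0$, one has $Q(v_i) = c$ for $i < t$ and, via the polar form $B(x,y) = Q(x+y) - Q(x) - Q(y)$, also $B(v_i,v_j) = c$ for distinct $i,j < t$. Outside characteristic $2$ this means that the Gram matrix of $v_1,\dots,v_{t-1}$ under $B$ is $c(J + I)$, whose determinant $c^{t-1}t$ is nonzero provided $\mathrm{char}(\mathbb{F}_q) \nmid t$. The non-degeneracy of $B$ on the $(t-1)$-dimensional space $V$ then forces $v_1,\dots,v_{t-1}$ to be a basis with prescribed Gram matrix, and comparing the number of isotropic vectors in this span (computed directly from the Gram matrix) against the count prescribed by the geometric type of $Q$ yields a contradiction.

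The main obstacle is this last geometric step: the algebraic obstruction to a $K_t$-clique depends delicately on the type of $Q$ (hyperbolic vs.\ elliptic vs.\ parabolic), the parity of $t - 1$, and whether $\mathrm{char}(\mathbb{F}_q)$ divides $t$ or $c$. Calibrating these choices to rule out cliques of size exactly $t$ while preserving $|S| \asymp q^{t-2}$ is the heart of the argument; in the exceptional cases one typically replaces the quadric by a Hermitian variety over the quadratic extension, which preserves the exponent $1 - 1/(t-1)$ at no asymptotic cost.
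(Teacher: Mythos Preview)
Your outline is sound and would yield a valid proof, but it takes a different route from the paper. The paper works with the \emph{projective} orthogonality graph $\mathrm{AK}(t-1,q)$ restricted to points of non-square norm; for odd $t$ the $K_t$-freeness comes from the cross-product identity $\mathbf{x}_t\cdot\mathbf{x}_t = a^2\prod_{i<t}(\mathbf{x}_i\cdot\mathbf{x}_i)$, to which one applies the quadratic character to force $-1=(-1)^{t-1}=1$, while even $t$ is reduced to the odd case by passing to the link of a vertex in $\mathrm{AK}(t,q)$ (which is a copy of $\mathrm{AK}(t-1,q)$). Your construction is instead an \emph{affine} Cayley graph on $\mathbb{F}_q^{t-1}$ with connection set a quadric level set, and your obstruction is that a $K_t$ would force the Gram matrix $c(I+J)$ and hence pin down the discriminant of $Q$ modulo squares. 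Both arguments ultimately rest on the quadratic character, but yours treats all $t$ uniformly (at the price of the restriction $p\nmid 2t$, which is harmless for the asymptotic statement and makes your Hermitian fallback unnecessary), whereas the paper's odd/even split gives a one-line contradiction in each case and makes the isomorphism with the Bishnoi--Ihringer--Pepe graph transparent.

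Two places in your sketch need tightening. First, ``comparing the number of isotropic vectors'' only separates the two form-types when $t-1$ is even; when $t-1$ is odd the isotropic counts coincide and you must compare discriminants directly (which still works, since $\det\bigl(c(I+J)\bigr)=c^{t-1}t$ fixes a square class you can avoid). Second, the eigenvalue bound $\lambda=O(q^{(t-2)/2})$ does not follow from a bare Gauss-sum evaluation: after introducing an auxiliary additive character to detect $Q(v)=c$ and completing the square, one is left with a one-variable Kloosterman or Sali\'e sum whose square-root cancellation is what pushes $\lambda$ down from $O(q^{(t-1)/2})$ to $O(q^{(t-2)/2})=O(\sqrt{d})$.
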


Mattheus and Pavese~\cite{MP22} give a different construction of $K_{t}$-free pseudorandom graphs which also matches the bound in Theorem~\ref{THM:BIP20}.
In Section~\ref{SEC:Kt-free}, we will present a construction that is isomorphic to the construction of Bishnoi, Ihringer, and Pepe~\cite{BIP20}, and give a new proof
to Theorem~\ref{THM:BIP20}.

For bipartite graphs, the pseudorandom version of the Tur\'{a}n problem does not appear to differ much from the ordinary Tur\'{a}n problem  since many constructions for the lower bound are pesudorandom. For example,
for  complete bipartite graphs, the projective norm graphs (see~\cite{KRS96,ARS99}) are optimally pseudorandom (see~\cite{Szabo03}) and do not contain $K_{s,t}$ with $t\ge (s-1)!+1$. Therefore, together with the well known K\"{o}vari--S\'{o}s--Tur\'{a}n Theorem~\cite{KST54},
we know that $\mathrm{exp}(K_{s,t}) = 1-\frac{1}{s}$ for all positive integers $s,t$ with $t\ge (s-1)!+1$.
For even cycles, constructions from generalized polygons~\cite{LUW99} and an old result of Bondy and Simonovits~\cite{BS74} imply that $\mathrm{exp}(C_6) = \frac{4}{3}$ and $\mathrm{exp}(C_{10}) = \frac{6}{5}$. The value of $\mathrm{exp}(C_{2k})$ for $k \neq 2,3,5$ are still unknown due to the lack of constructions.
For non-bipartite graphs, $\mathrm{exp}(F)$ is completely different from the ordinary Tur\'{a}n problem (indeed, $\mathrm{exp}(F)<2$ while $\pi(F) >0$)  and there are very  graphs $F$ for which $\mathrm{exp}(F)$ is known.
For example, for odd cycles,
a construction due to Alon and Kahale~\cite{AK98} together with Proposition~4.12 in~\cite{KS06} implies
that $\mathrm{exp}(C_{\ell}) = \frac{2}{\ell}$ for all odd integers $\ell \ge 3$.

The first general upper bound on $\exp(F)$,  is due to Kohayakawa R\"{o}dl,  Schacht, Sissokho, and Skokan~\cite{KRSSS07}. They prove that $\mathrm{exp}(F) \le 1 - \frac{1}{2\nu(F)-1}$
%{\color{blue} It seems that I made a mistake here, maybe the correct bound is $\mathrm{exp}(F) \le 1 - \frac{1}{2\nu(F)-1}$.}
for every triangle-free graph $F$.
Here $\nu(F) = \frac{1}{2}\left(d(F) + D(F) +1\right)$, where $D(F) = \min\left\{2d(F), \Delta(F)\right\}$ and $d(F)$ is the degeneracy of $F$. This was improved via the following result of Conlon, Fox, and Zhao in~\cite{CFZ14}.

\begin{theorem} [Conlon--Fox--Zhao~\cite{CFZ14}] \label{THM:CFZ}
For every graph $F$, we have $\mathrm{exp}(F) \le 1-\frac{1}{2d_2(F)+1}$,
where $d_2(F)$ is the minimum real number $d$ such that there is an ordering of the vertices $v_1, \ldots, v_m$ of $F$ so that $N_{<i}(v_i)+N_{<i}(v_j) \le 2d$ for all edges $v_i v_j \in F$.
Here $N_{<i}(v)$ is the number of neighbors of $v$ in $\{v_1, \ldots, v_{i-1}\}$.
%is the $2$-degeneracy of $F$ whose definition can be found in~\cite{CFZ14}.
\end{theorem}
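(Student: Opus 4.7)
The plan is to prove Theorem~\ref{THM:CFZ} by a greedy embedding strategy that extends a partial copy of $F$ in $G$ one vertex at a time, following an ordering $v_1,\ldots,v_m$ of $V(F)$ that witnesses $d=d_2(F)$. Assume $G$ is an $n$-vertex $(p,\alpha)$-jumbled graph with $\alpha\le C\sqrt{pn}$; the target is to show that whenever $p\ge C'\,n^{-1/(2d+1)}$ for a suitable constant $C'=C'(F,C)$, $G$ must contain a copy of $F$, which suffices to conclude $\mathrm{exp}(F)\le 1-\frac{1}{2d+1}$.

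For each $j$, once images $u_1,\ldots,u_{i-1}$ of $v_1,\ldots,v_{i-1}$ have been chosen, define the \emph{candidate set} for $v_j$ to be $U_j^{(i)}:=\bigcap_{k<i,\,v_kv_j\in E(F)} N_G(u_k)\setminus\{u_1,\ldots,u_{i-1}\}$. The main tool is the standard consequence of \eqref{equ:jumble}: for any $W\subset V(G)$ with $|W|\ge 1$, the number of $u\in V(G)$ with $\bigl||N_G(u)\cap W|-p|W|\bigr|\ge\varepsilon p|W|$ is at most $2\alpha^2/(\varepsilon^2 p^2|W|)$, obtained by applying \eqref{equ:jumble} to the pair (bad set, $W$). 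Iterating this along the ordering, as long as the embedding remains typical so far, gives $|U_j^{(i)}|=(1+o_F(1))\,p^{|N_{<i}(v_j)|}\,n$ for every $j\ge i$.

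At step $i$, I want to choose $u_i\in U_i^{(i)}$ such that, simultaneously, for every $j>i$ with $v_iv_j\in E(F)$, the updated set $U_j^{(i+1)}=U_j^{(i)}\cap N_G(u_i)$ still satisfies $|U_j^{(i+1)}|\ge(1-\varepsilon)p\,|U_j^{(i)}|$. By the jumbledness bound above, the number of \emph{bad} $u_i$ that fail this for a given $j$ is at most $2\alpha^2/(\varepsilon^2 p^2 |U_j^{(i)}|)$. The key inequality $|N_{<i}(v_i)|+|N_{<i}(v_j)|\le 2d$ gives $|U_j^{(i)}|\gtrsim p^{2d-|N_{<i}(v_i)|}\,n$, so summing the bad counts over the $O_F(1)$ relevant $j$ yields a total of $O_F\bigl(\alpha^2/(p^{2d-|N_{<i}(v_i)|+2}\,n)\bigr)$ bad vertices. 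Since $|U_i^{(i)}|\gtrsim p^{|N_{<i}(v_i)|}\,n$, a valid $u_i$ exists as soon as $\alpha^2\ll p^{2d+2}n^2$, i.e.\ $\alpha\ll p^{d+1}n$. Combined with $\alpha\le C\sqrt{pn}$, this reduces precisely to $p\gg n^{-1/(2d+1)}$, so the embedding succeeds through step $m$ and produces a copy of $F$ in $G$.

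The main obstacle is setting up a clean enough inductive invariant so that the jumbledness errors actually compose across the $m$ embedding steps: at each stage one needs not only that $|U_i^{(i)}|$ is close to its expected size, but also that every future candidate set $U_j^{(i)}$ remains $(1\pm\varepsilon_i)$-close to $p^{|N_{<i}(v_j)|}\,n$, with the errors $\varepsilon_i$ growing only mildly. The argument is tight at edges $v_iv_j$ that saturate $|N_{<i}(v_i)|+|N_{<i}(v_j)|=2d$, and it is this worst-case edge that produces the exponent $\frac{1}{2d+1}$ in the final bound.
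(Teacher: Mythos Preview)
Your proposal is correct and follows essentially the same greedy-embedding strategy as the paper (which cites Theorem~\ref{THM:CFZ} to~\cite{CFZ14} but effectively reproves it as the $k=m$ case of Theorem~\ref{THM:CFZ+tree} via Claim~\ref{CLAIM:size-Xi}). Both arguments maintain candidate sets of size $\gtrsim p^{|N_{<i}(v_j)|}n$ and, at step~$i$, exploit jumbledness through the product bound $|U_i^{(i)}||U_j^{(i)}|\gtrsim p^{2d}n^2\gg(\alpha/p)^2$, i.e.\ $\alpha\ll p^{d+1}n$; the only cosmetic differences are that the paper pre-partitions $V(G)$ into $m$ parts to guarantee distinct images and packages your bad-vertex count as Lemma~\ref{lem:standard}.
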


In some cases the bound provided by Theorem~\ref{THM:CFZ} is sharp (e.g. it is conjectured to be sharp for cliques), but we speculate that for most graphs $F$ it can be improved. Below we give an improvement that holds for many graphs.

\begin{theorem}\label{THM:CFZ+tree}
Let $F$ be a fixed graph and $d$ be such that the following holds. There exists an ordering $v_1,v_2, \ldots, v_m$ of the vertices of $F$ and a $1 \leq k \leq m$ such that:
\begin{itemize}
    \item $F[\{v_{k}, \ldots, v_m\}]$ is a forest,
    \item for all edges $v_iv_j \in F$ with $i<j$, $i<k$, we have $N_{< i}(v_i) + N_{< i}(v_j) \leq 2d$, and
    \item for all edges $v_iv_j \in F$ with $k \leq i < j$, we have $N_{<k}(v_i) + N_{<k}(v_j) \leq 2d$.
\end{itemize}
Then, $\mathrm{exp}(F) \leq 1 - \frac{1}{2d+1}$.
\end{theorem}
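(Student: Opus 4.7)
The plan is to adapt the embedding framework of Conlon--Fox--Zhao underlying Theorem~\ref{THM:CFZ}, weakening its hypothesis on the suffix of the ordering by exploiting the forest structure. Write $A = \{v_1, \dots, v_{k-1}\}$ and $B = \{v_k, \dots, v_m\}$. Because $F[B]$ is a forest, I would first re-order $B$ (keeping $A$ fixed) so that within each component of $F[B]$ the vertices appear in BFS order from some chosen root; the resulting listing, still denoted $v_k, \dots, v_m$, has the property that every non-root $v_j \in B$ has a unique earlier $B$-neighbor $p_j$, namely its parent. This re-ordering does not affect the hypotheses of the theorem, since the two bullets governing edges inside $B$ are phrased in terms of $N_{<k}$ rather than $N_{<i}$.

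I would then show that every $n$-vertex $(p,\alpha)$-jumbled graph $G$ with $\alpha \le c\, p^{d+1} n$ (for some small $c = c(F) > 0$) contains a labelled copy of $F$; combined with $\alpha = \Omega(\sqrt{np})$, this forces $p = O(n^{-1/(2d+1)})$ for $F$-free pseudorandom graphs and hence $\mathrm{exp}(F) \le 1 - \frac{1}{2d+1}$. The embedding proceeds in two stages. In Stage~1, I embed $F[A]$ into $G$ via the CFZ counting lemma: the second bullet of the hypothesis restricted to edges with $j<k$ gives $d_2(F[A]) \le d$ for the given ordering of $A$, yielding $(1+o(1))\, p^{e(F[A])} n^{k-1}$ labelled homomorphic copies of $F[A]$ in $G$. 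By iterated DISC-inheritance for jumbled graphs, all but an $o(1)$-fraction of these embeddings $\phi \colon A \to V(G)$ are \emph{typical}: for every $v_j \in B$ the common neighborhood of $\phi(N(v_j) \cap A)$ in $G$ has size $(1 \pm o(1))\, p^{|N(v_j) \cap A|} n$ and itself inherits a suitable jumbledness. In Stage~2, I extend each typical $\phi$ to $B$ by processing the forest in the BFS order fixed above, placing $v_j$ at any vertex of the common neighborhood of $\phi(Q_j)$, where $Q_j := (N(v_j) \cap A) \cup \{p_j\}$ (with $p_j$ omitted when $v_j$ is a root).

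The decisive feature is that the forest structure on $B$ collapses the CFZ ``two-endpoint'' cost of each edge in $F[B]$ to a single-endpoint cost: each non-root $v_j \in B$ has a \emph{unique} prior neighbor in $B$, so $|Q_j| = |N(v_j) \cap A| + 1$. The third bullet of the hypothesis, $|N(v_i) \cap A| + |N(v_j) \cap A| \le 2d$ for each forest edge $v_i v_j$ (with $v_i$ the parent of $v_j$), then bounds $|Q_i| + |Q_j| \le 2d + 2$, which is exactly what keeps the DISC-inheritance error at this edge controlled by the assumption $\alpha \le c\, p^{d+1} n$. For edges incident to $A$, the second bullet plays the role of the standard CFZ $d_2$-condition.

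The main obstacle is to control the accumulation of error terms throughout Stage~2 as more and more vertices of $B$ are placed. I would handle this by an induction on $j \ge k$: for all but an exponentially small (in a parameter depending only on $F$) fraction of partial embeddings up to step $j$, the common neighborhood used at step $j+1$ has the correct size and retains enough pseudorandomness for subsequent steps. A union bound over the $m - k + 1$ extension steps then guarantees that a positive fraction of the typical embeddings of $A$ extend to a full embedding of $F$ in $G$, contradicting the assumed $F$-freeness at the stated jumbledness level.
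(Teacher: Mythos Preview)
Your Stage~1 is essentially right and close to what the paper does (the paper carries out the same thing more elementarily via a greedy one-vertex-at-a-time embedding, Claim~\ref{CLAIM:size-Xi}, without invoking the full CFZ counting lemma). The real gap is in Stage~2: a root-to-leaves (BFS) extension of the forest does \emph{not} achieve the exponent $1-\frac{1}{2d+1}$ under the hypothesis $\alpha \le c\,p^{d+1}n$.

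Concretely, once you have committed to $\phi(p_j)$ for a non-root $v_j\in B$, the candidate set for $v_j$ is $C_j = X_j\cap N(\phi(p_j))$, of size only about $p\,|X_j|\approx p^{\,|N_{<k}(v_j)|+1}n$. To continue one more level you must find in $C_j$ a vertex with neighbours in $X_\ell$ for each child $v_\ell$ of $v_j$; the jumbledness input for this is $|C_j|\cdot|X_\ell|\gg(\alpha/p)^2$. But $|C_j|\cdot|X_\ell|\approx p^{\,|N_{<k}(v_j)|+|N_{<k}(v_\ell)|+1}n^2\ge p^{\,2d+1}n^2$ by the third bullet, whereas $(\alpha/p)^2\le c^2 p^{2d}n^2$. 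So you are short by a factor of $p$, and the argument only closes if $p$ is bounded away from~$0$, which it is not in the relevant regime. (Your displayed quantity $|Q_i|+|Q_j|\le 2d+2$ is not the operative one; what governs the step at the edge $v_iv_j$ with $i<j$ is $N_{<i}(v_i)+N_{<i}(v_j)$, and in your BFS ordering this equals $|N_{<k}(v_i)|+|N_{<k}(v_j)|+1$ whenever $v_i$ is not a root, i.e.\ $2d+1$, one too many.) In the Petersen case this is already visible: rooting the forest $F[B]$ at $5$ and trying to pass from depth~$1$ to depth~$2$ requires $pn\cdot p^2n\gg p^2n^2$, which fails.

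The paper's fix is to embed the forest \emph{leaf-to-root} rather than root-to-leaf. This is Lemma~\ref{LEMMA:embed-tree}: strip a leaf $v_m$ with parent $v_{m-1}$ and replace $X_{m-1}$ by those vertices having a neighbour in $X_m$; jumbledness guarantees $|X'_{m-1}|\ge |X_{m-1}|/2$, so the parent's candidate set shrinks only by a constant factor, not by $p$. Iterating, every product $|X_i||X_j|$ along a forest edge loses at most a factor $2^{m}$, and the hypothesis $|X_i||X_j|\ge p^{2d}n^2/\text{const}\gg(\alpha/p)^2$ survives throughout. Replacing your top-down Stage~2 with this bottom-up forest lemma repairs the proof; the rest of your plan then matches the paper.
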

\bigskip

\textbf{Remark.} Roughly speaking, Theorem~\ref{THM:CFZ+tree}  says that if there exists an induced forest on an interval of some ordering of $V(F)$, then it is possible to improve the bound of Theorem~\ref{THM:CFZ} by treating this forest as one edge.
In fact, we will see later that the proof of Theorem~\ref{THM:CFZ+tree} can be extended easily to get a  more general result.
\bigskip

As an application of Theorem~\ref{THM:CFZ+tree}, we study  $\mathrm{exp}(\textbf{P})$, where $\textbf{P}$ is the Petersen graph.
The Petersen graph was considered by several researchers in related contexts.
For example, Tait and Timmons~\cite{TT16} proved that the Erd\H{o}s--R\'{e}nyi orthogonal polarity graphs~\cite{ER62} (henceforth the Erd\H{o}s--R\'{e}nyi graph), which are optimally pseudorandom $C_4$-free graphs, contain the Petersen graph as a subgraph.
Conlon, Fox, Sudakov, and Zhao asked in~\cite{CFSZ21} whether there is a counting lemma for the Petersen graph in an $n$-vertex $C_4$-free graph with $\Omega(n^{3/2})$ edges.
We know very little about $\mathrm{exp}(\textbf{P})$,
for example,  it is not known whether  $\mathrm{exp}(\textbf{P}) \ge \frac{1}{2}$. The only lower bound we have  is  $\mathrm{exp}(\textbf{P}) \ge \mathrm{exp}(C_5) = \frac{2}{5}$. In the other direction, the previous best  upper bound is $\mathrm{exp}(\textbf{P}) \le \frac{3}{4}$ that follows from~\cite{CFZ14} (it is not too difficult to prove that $d_2(\textbf{P}) = \frac{3}{2}$).

%%%%%%%%%%%%%%%%%%%%%%%%%%%%%%%%%%%%%%%%%%%%%%
%%%%%%%%%%%%%%%%%%%%%%%%%%%%%%%%%%%%%%%%%%%%%%%%%%%%%%%
\begin{figure}[htbp]\label{Fig:Petersen}
\centering
\subfigure
{
\begin{minipage}[t]{0.45\linewidth}
\centering
\tikzset{every picture/.style={line width=1pt}} %set default line width to 0.75pt

\begin{tikzpicture}[x=0.75pt,y=0.75pt,yscale=-1,xscale=1]
%uncomment if require: \path (0,374); %set diagram left start at 0, and has height of 374

%Shape: Regular Polygon [id:dp6543675591941622]
\draw   (411.63,152.47) -- (377.17,259.15) -- (265.07,259.35) -- (230.24,152.79) -- (320.82,86.73) -- cycle ;
%Straight Lines [id:da9915029263040878]
\draw    (320.82,86.73) -- (320.6,141.28) ;
%Straight Lines [id:da849229020734138]
\draw    (411.63,152.47) -- (359.68,169.12) ;
%Straight Lines [id:da5226098520293585]
\draw    (377.17,259.15) -- (345.29,214.89) ;
%Straight Lines [id:da24122578213793888]
\draw    (297.31,215.34) -- (265.07,259.35) ;
%Straight Lines [id:da7665628200182351]
\draw    (230.24,152.79) -- (282.05,169.85) ;
%Straight Lines [id:da13579134606824694]
\draw    (320.6,141.28) -- (345.29,214.89) ;
%Straight Lines [id:da9066719542980706]
\draw    (282.05,169.85) -- (345.29,214.89) ;
%Straight Lines [id:da32328834696973585]
\draw    (359.68,169.12) -- (282.05,169.85) ;
%Straight Lines [id:da8600078238632325]
\draw    (297.31,215.34) -- (359.68,169.12) ;
%Straight Lines [id:da10166710359426867]
\draw    (320.6,141.28) -- (297.31,215.34) ;
%%%%%%%%%%%%%%%%%%%%%%%%%%%%%%%%%%%%%%%%%%%%%%%%%%%%%%
\draw[line width=2pt] (411.63,152.47) circle [radius = 1.5];
\draw[line width=2pt] (377.17,259.15) circle [radius = 1.5];
\draw[line width=2pt] (265.07,259.35) circle [radius = 1.5];
\draw[line width=2pt] (230.24,152.79) circle [radius = 1.5];
\draw[line width=2pt] (320.82,86.73) circle [radius = 1.5];
\draw[line width=2pt] (320.6,141.28) circle [radius = 1.5];
\draw[line width=2pt] (359.68,169.12) circle [radius = 1.5];
\draw[line width=2pt] (345.29,214.89) circle [radius = 1.5];
\draw[line width=2pt] (265.07,259.35) circle [radius = 1.5];
\draw[line width=2pt] (282.05,169.85) circle [radius = 1.5];
\draw[line width=2pt] (297.31,215.34) circle [radius = 1.5];
%%%%%%%%%%%%%%%%%%%%%%%%%%%%%%%%%%%%%%%
% Text Node
\draw (318,68) node [anchor=north west][inner sep=0.75pt]   [align=left] {1};
% Text Node
\draw (418,147) node [anchor=north west][inner sep=0.75pt]   [align=left] {4};
% Text Node
\draw (213,147) node [anchor=north west][inner sep=0.75pt]   [align=left] {2};
% Text Node
\draw (247,263) node [anchor=north west][inner sep=0.75pt]   [align=left] {5};
% Text Node
\draw (379,263) node [anchor=north west][inner sep=0.75pt]   [align=left] {10};
% Text Node
\draw (328,132) node [anchor=north west][inner sep=0.75pt]   [align=left] {3};
% Text Node
\draw (335,219) node [anchor=north west][inner sep=0.75pt]   [align=left] {8};
% Text Node
\draw (299,219) node [anchor=north west][inner sep=0.75pt]   [align=left] {7};
% Text Node
\draw (272,174) node [anchor=north west][inner sep=0.75pt]   [align=left] {6};
% Text Node
\draw (361,174) node [anchor=north west][inner sep=0.75pt]   [align=left] {9};
\end{tikzpicture}
\end{minipage}
}
%%%%%%%%%%%%%%%%%%%%%%%%%%%%%%%%%%%%%%
\subfigure
{
\begin{minipage}[t]{0.45\linewidth}
\centering
\tikzset{every picture/.style={line width=1pt}} %set default line width to 0.75pt
\begin{tikzpicture}[x=0.75pt,y=0.75pt,yscale=-1.1,xscale=1.1]
%uncomment if require: \path (0,374); %set diagram left start at 0, and has height of 374

%Straight Lines [id:da2532248342625114]
\draw    (140,170) -- (200,120) ;
%Straight Lines [id:da7427449710126133]
\draw    (140,170) -- (200,170) ;
%Straight Lines [id:da2147803345828203]
\draw    (140,170) -- (200,220) ;
%Curve Lines [id:da9335459880804347]
\draw    (200,120) -- (300,100) ;
%Curve Lines [id:da12090079952608956]
\draw    (200,120) -- (300,170) ;
%Curve Lines [id:da8833524804246176]
\draw    (200,170) -- (300,240) ;
%Curve Lines [id:da3608938856371524]
\draw    (200,170) -- (350,240) ;
%Curve Lines [id:da4513665571020591]
\draw   (200,220) -- (350,170) ;
%Curve Lines [id:da5329008401556488]
\draw   (200,220) -- (350,100) ;
%Straight Lines [id:da2442194483492881]
\draw    (300,100) -- (350,100) ;
%Straight Lines [id:da7850415101963177]
\draw    (300,170) -- (350,240) ;
%Straight Lines [id:da0313269194048067]
\draw    (350,170) -- (300,240) ;
%Straight Lines [id:da6377283050760938]
\draw    (300,170) -- (350,170) ;
%Curve Lines [id:da37086328727966555]
\draw    (300,100) .. controls (280,160) and (280,180) .. (300,240) ;
%Curve Lines [id:da37825718826157684]
\draw    (350,100) .. controls (370,160) and (370,180) .. (350,240) ;
%%%%%%%%%%%%%%%%%%%%%%%%%%%%%%%%%%%%%%%
\draw[line width=2pt] (140,170)  circle [radius = 1.5];
\draw[line width=2pt] (200,120)  circle [radius = 1.5];
\draw[line width=2pt] (200,170)  circle [radius = 1.5];
\draw[line width=2pt] (200,220)  circle [radius = 1.5];
\draw[line width=2pt] (300,100)  circle [radius = 1.5];
\draw[line width=2pt] (300,170)  circle [radius = 1.5];
\draw[line width=2pt] (300,240) circle [radius = 1.5];
\draw[line width=2pt] (350,100)  circle [radius = 1.5];
\draw[line width=2pt] (350,170)  circle [radius = 1.5];
\draw[line width=2pt] (350,240)  circle [radius = 1.5];
%%%%%%%%%%%%%%%%%%%%%%%%%%%%%%%%%%%%%%%
% Text Node
\draw (120,165) node [anchor = north west][inner sep=0.75pt]   [align=left] {$1$};
% Text Node
\draw (195,150) node [anchor=north west][inner sep=0.75pt]   [align=left] {$3$};
% Text Node
\draw (195,100) node [anchor=north west][inner sep=0.75pt]   [align=left] {$2$};
% Text Node
\draw (195,200) node [anchor=north west][inner sep=0.75pt]   [align=left] {$4$};
% Text Node
\draw (295,80) node [anchor=north west][inner sep=0.75pt]   [align=left] {$6$};
% Text Node
\draw (345,80) node [anchor=north west][inner sep=0.75pt]   [align=left] {$9$};
% Text Node
\draw (295,150) node [anchor=north west][inner sep=0.75pt]   [align=left] {$5$};
% Text Node
\draw (340,150) node [anchor=north west][inner sep=0.75pt]   [align=left] {$10$};
% Text Node
\draw (295,248) node [anchor=north west][inner sep=0.75pt]   [align=left] {$8$};
% Text Node
\draw (345,248) node [anchor=north west][inner sep=0.75pt]   [align=left] {$7$};
\end{tikzpicture}
\end{minipage}
}
\caption{The Petersen graph in two different drawings.}
\end{figure}
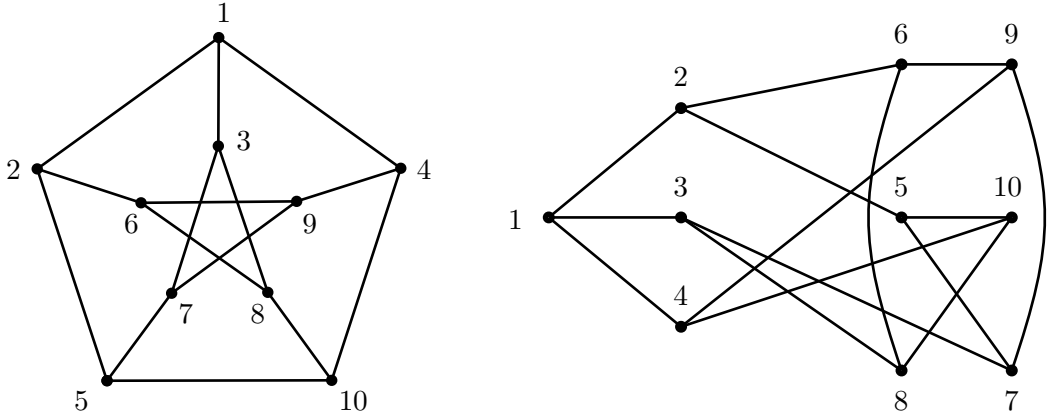
%%%%%%%%%%%%%%%%%%%%%%%%%%%%%%%%%%%%%%%%%%%%%%%%%%%%%%%%%

\begin{theorem}\label{THM:Petersen-exp}
We have $\mathrm{exp}(\textbf{P}) \le \frac{2}{3}$.
\end{theorem}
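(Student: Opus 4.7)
The plan is to invoke Theorem~\ref{THM:CFZ+tree} with $d = 1$, which gives $1 - \tfrac{1}{2d+1} = \tfrac{2}{3}$. The task therefore reduces to exhibiting an ordering $v_1, \ldots, v_{10}$ of $V(\textbf{P})$ together with an index $k$ satisfying the three hypotheses of the theorem with $2d = 2$.

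The structural observation I would use is that, in the labelling of the figure, $S := \{1, 2, 3, 4, 5, 6\}$ induces a tree in $\textbf{P}$---a double star with edges $1$--$2$, $1$--$3$, $1$--$4$, $2$--$5$, $2$--$6$---while the complementary set $\{7, 8, 9, 10\}$ induces the perfect matching with edges $7$--$9$ and $8$--$10$. I would take $k = 5$, set $(v_5, \ldots, v_{10}) = (1, 2, 3, 4, 5, 6)$, and place $\{7, 8, 9, 10\}$ in any order as $v_1, \ldots, v_4$. The forest condition of the theorem is then immediate.

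For the third condition, concerning edges $v_iv_j$ with both endpoints in $S$, I would use $3$-regularity of $\textbf{P}$: for $v \in S$, $N_{<5}(v) = 3 - \deg_S(v)$, so the required bound $N_{<5}(v_i) + N_{<5}(v_j) \leq 2$ translates to $\deg_S(v_i) + \deg_S(v_j) \geq 4$ on the induced double star. Since every edge of the double star meets one of the two centers $1, 2$ (each of $S$-degree $3$), this is automatic.

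The main verification is the second condition, which must be checked for every edge whose earlier endpoint lies in $\{v_1, \ldots, v_4\} = \{7, 8, 9, 10\}$. The key point is that each vertex of $\{7, \ldots, 10\}$ has exactly one neighbor inside this block (its matching partner), and each vertex of $S$ has at most two neighbors in $\{7, \ldots, 10\}$. When the earlier endpoint $v_i$ of such an edge is added, its back-degree into $\{7, \ldots, 10\}$ is at most $1$ (its matching partner, if already present), and the back-degree of the other endpoint is at most $1$ as well: if the other endpoint lies in the same block, its unique neighbor there is $v_i$, which is not yet present; if it lies in $S$, then one of its at most two neighbors in the block is $v_i$ itself and is excluded. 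The total is at most $2$, completing the verification, and Theorem~\ref{THM:CFZ+tree} then yields $\exp(\textbf{P}) \leq 2/3$.
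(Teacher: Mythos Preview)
Your proposal is correct and matches the paper's second route to the theorem: the paper explicitly notes that Theorem~\ref{THM:Petersen-exp} follows from Theorem~\ref{THM:CFZ+tree} with $k=5$ and an ordering whose first four vertices induce a perfect matching and whose last six induce a double star (the paper uses $(1,3,6,9,2,4,5,7,8,10)$, which is your decomposition up to an automorphism of $\textbf{P}$). The paper additionally supplies a self-contained direct embedding argument (Theorem~\ref{THM:Petersen-jumbled-2nd}) that does not go through Theorem~\ref{THM:CFZ+tree}, but your verification of the hypotheses of Theorem~\ref{THM:CFZ+tree} is clean and complete.
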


We conjecture that $\mathrm{exp}(\textbf{P}) = \frac{2}{3}$.
We think that the construction of $K_3$-free pseudorandom graphs due to Kopparty does not contain the Petersen graph as a subgraph. If this is true, then it will prove the lower bound $\mathrm{exp}(\textbf{P}) \ge \frac{2}{3}$. For completeness, we include his construction here.

Let $p\neq 3$ be a prime, and let $\mathbb{F}_{q}$ be a finite field with
where $q = p^h$ for some integer $h\ge 1$.
% Let $T\subset \mathbb{F}_{q}$ be a set that contains no solution (over $\mathbb{F}_{q}$) to the equation $x+y+z=0$.
Recall that the \textit{absolute trace function} $\mathrm{Tr}\colon \mathbb{F}_{q} \to \mathbb{F}_{p}$ is defined as $\mathrm{Tr}(\alpha) = \alpha + \alpha^{p} + \cdots + \alpha^{p^{h-1}}$ for every $\alpha \in \mathbb{F}_{q}$.

Let $V = \mathbb{F}_{q}^{3}$, $T =  \left\{x \in \mathbb{F}_{q} \colon \mathrm{Tr}(x) \in \{1, -1\}\right\}$, and $S\subset \mathbb{F}_{q}^3$ be a subset defined as
\begin{align*}
    S = \left\{(xy, xy^2, xy^3) \colon x\in T, \ y\in \mathbb{F}_{q}\setminus\{0\}\right\}.
\end{align*}
Kopparty's construction is the graph $G$ on $V$ in which two vertices $\textbf{u}, \textbf{v} \in V$ are adjacent iff $\textbf{u}-\textbf{v} \in S$.
Using some simple linear algebra  one can show that $G$ is triangle-free,
and using some results about finite fields and abelian groups one can prove that $G$ is an $(n,d,\lambda)$-graph with $n = q^3$, $d = \Theta(\frac{q^2}{p})$, and $\lambda = \Theta(\frac{q}{p})$.

%%%%%%%%%%%%%%%%%%%%%%%%%%%%%%%%%%%%%%%%%%%%
\textbf{Remark.}
Ferdinand Ihringer informed us that the construction above contains an induced copy of the Petersen graph when $p=2$ and $h=3$,
and he thinks that, in general, Kopparty's construction contains many copies of the Petersen graph.
Nevertheless, it still might be true that $\mathrm{exp}(\textbf{P}) = \frac{2}{3}$.
%%%%%%%%%%%%%%%%%%%%%%%%%%%%%%%%%%%%%%%%%%%%

Our next result about $K_{2,3}$ was motivated by an old problem of Erd\H{o}s~\cite{Erdos75},
which asks if
\begin{align*}
    \mathrm{ex}(n,\{K_3, C_4\}) = \left(\frac{1}{2\sqrt{2}}+o(1)\right)n^{3/2}
\end{align*}
is true.
A construction due to Parsons~\cite{Parson76} for the lower bound comes from the Erd\H{o}s--R\'{e}nyi graph by removing half of its vertices.
Since the Erd\H{o}s--R\'{e}nyi graph is optimally pseudorandom, Parsons' construction also implies that
$\mathrm{ex}_{\mathrm{rand}}(n,C,\{K_3, C_4\}) \ge \left(\frac{1}{2\sqrt{2}}+o(1)\right)n^{3/2}$ for some absolute constant $C$.
In~\cite{AKSV14}, Allen, Keevash, Sudakov, and Verstra\"{e}te proved that the extremal constructions for $\mathrm{ex}(n,\{K_3, K_{2,t}\})$ cannot be bipartite for every $t\ge 3$
by constructing a $\{K_3, K_{2,t}\}$-free graph whose number of edges is greater than the maximum number of edges in a $\{K_3, K_{2,t}\}$-free bipartite graph.
However, their construction is $(t-1)$-partite, and therefore it does not give a lower bound for $\mathrm{ex}_{\mathrm{rand}}(n,C,\{K_3, K_{2,3}\})$.  The previous best lower bound is $$\mathrm{ex}_{\mathrm{rand}}(n,C,\{K_3, K_{2,3}\})\ge
\mathrm{ex}_{\mathrm{rand}}(n,C,\{K_3, C_4\})\ge \left(\frac{1}{2\sqrt 2}-o(1)\right) n^{3/2}$$
that follows from Parsons' construction.
We improve this and present a construction of the densest known $\{K_3, K_{2,3}\}$-free pseudorandom graphs.

\begin{theorem}\label{THM:K3-K2,3}
We have $\mathrm{ex}_{\mathrm{rand}}(n,2,\{K_3, K_{2,3}\}) \ge \left(\frac{1}{2}-o(1)\right)n^{3/2}$.
\end{theorem}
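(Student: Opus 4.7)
The plan is to give an explicit construction of a Cayley graph over $\mathbb{F}_q^2$ that is $\{K_3, K_{2,3}\}$-free with the desired edge count and jumbledness. Let $q$ be a prime power with $\mathrm{char}(\mathbb{F}_q) \notin \{2,3\}$, set $V = \mathbb{F}_q^2$, so $n = q^2$, and let $S = \{(x, x^3) : x \in \mathbb{F}_q^*\}$. Since $-(x, x^3) = (-x, (-x)^3) \in S$, this set is symmetric, so the Cayley graph $G := \mathrm{Cay}(\mathbb{F}_q^2, S)$ is a well-defined undirected $(q-1)$-regular graph, hence $e(G) = \tfrac{1}{2}(q-1) q^2 = (\tfrac{1}{2} - o(1))n^{3/2}$.

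Next I would verify the forbidden subgraph conditions. For triangle-freeness, a triangle in $G$ corresponds to a solution $s_1, s_2, s_1+s_2 \in S$; writing $s_i = (x_i, x_i^3)$, the constraint $(x_1+x_2)^3 = x_1^3+x_2^3$ simplifies to $3 x_1 x_2 (x_1+x_2) = 0$, which is impossible when $x_1, x_2, x_1+x_2 \in \mathbb{F}_q^*$ and $\mathrm{char}(\mathbb{F}_q) \ne 3$. For $K_{2,3}$-freeness, the number of common neighbors of two distinct vertices $u, v$ equals $|\{s \in S : s + (v-u) \in S\}|$, which is the number of solutions $(x_1, x_2) \in (\mathbb{F}_q^*)^2$ to $x_1 - x_2 = y$ and $x_1^3 - x_2^3 = z$, where $(y, z) := v - u$. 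Substituting $x_1 = x_2 + y$ reduces this to a single quadratic equation in $x_2$, with at most two solutions.

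For optimal pseudorandomness, I would use Fourier analysis on $\mathbb{F}_q^2$. The eigenvalues of $G$ are the Fourier coefficients $\widehat{\mathbf{1}_S}(\chi) = \sum_{s \in S} \chi(s)$, taken over characters $\chi$ of the additive group. Writing $\chi_{(a,b)}(x,y) = \psi(ax + by)$ for the canonical additive character $\psi$ of $\mathbb{F}_q$, these become $\sum_{x \in \mathbb{F}_q^*} \psi(ax + bx^3)$. When $b \ne 0$ this is an additive character sum of a cubic polynomial, bounded in absolute value by $2\sqrt q + 1$ via the Weil bound, and when $b = 0$ and $a \ne 0$ the sum is $-1$. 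Hence the nontrivial eigenvalues of $G$ are at most $(2 + o(1))\sqrt{d}$, so by the expander mixing lemma $G$ is $(p, \alpha)$-jumbled with $p = d/n$ and $\alpha \le (2 + o(1))\sqrt{np}$, giving the theorem as $q \to \infty$.

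The main obstacle is matching the precise constant $C = 2$ rather than $C = 2+o(1)$ in the jumbledness: the Weil estimate delivers $2\sqrt{q}$, marginally larger than $2\sqrt{d} = 2\sqrt{q-1}$. This small deficit would be closed either by a slightly tighter spectral analysis, by adjoining $O(1)$ extra elements to $S$ to nudge $d$ upward (while preserving sum-freeness and the bound on difference representations), or by passing to a characteristic in which a better Weil-type bound applies. A secondary issue is the small characteristic cases, especially $\mathrm{char}(\mathbb{F}_q) = 3$ where the sum-free argument degenerates; there one would replace $x^3$ by another low-degree monomial whose exponent is coprime to $q-1$, and rerun the same analysis.
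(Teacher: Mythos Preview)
Your approach is essentially identical to the paper's: it constructs the same Cayley graph $\mathrm{Cay}(\mathbb{Z}_p^2,\{(x,x^3):x\neq 0\})$ for primes $p\neq 3$, verifies $K_3$- and $K_{2,3}$-freeness by the same algebraic manipulations you give, and bounds the nontrivial eigenvalues by $2\sqrt{p}+1$ via Weil. Your concern about hitting $C=2$ exactly rather than $2+o(1)$ is one the paper itself does not resolve (it simply records $\lambda\le 2\sqrt{p}+1$ with $d=p-1$), so none of the fixes you propose are needed to match it; your exclusion of characteristic~$2$ is also unnecessary.
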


\textbf{Remark.}
Thang Pham pointed out to us that the following construction also provides a lower bound for $\mathrm{ex}_{\mathrm{rand}}(n,2,\{K_3, K_{2,3}\})$.
Let $q$ be an odd prime power.
The distance graph $D$ on $\mathbb{F}_{q}^2$ is a graph whose vertex set is  $\mathbb{F}_{q}^2$,
and two points $(x_1,x_2), (y_1, y_2)$ are adjacent iff $(x_1-y_1)^2+(x_2-y_2)^2=1$.
The $K_{2,3}$-freeness of $D$ follows from the fact that any two cycles have at most two points in the intersection.
The $K_3$-freeness of $D$ follows from results in~\cite{BIP14}.
The pseudorandomness of $D$ follows from results in~\cite{IR07}.

In Section~\ref{SEC:Proof-Petersen}, we prove Theorems~\ref{THM:CFZ+tree} and \ref{THM:Petersen-exp}.
In Section~\ref{SEC:K2-K23}, we prove Theorem~\ref{THM:K3-K2,3}.
In Section~\ref{SEC:Kt-free} we present a new proof of   Theorem~\ref{THM:BIP20}. Throughout the paper we will omit the use of floors and ceilings to make the presentation cleaner.
%%%%%%%%%%%%%%%%%%%%%%%%%%%%%%%%%%%%%%%%%%%%%%%%%%%%%%%%%%
\section{Upper bound for the Petersen graph}\label{SEC:Proof-Petersen}
\subsection{Proof of Theorem~\ref{THM:Petersen-exp}}\label{SUBSEC:Petersen-proof}
We prove Theorem~\ref{THM:Petersen-exp} in this section. In the next section, we will show that Theorem~\ref{THM:Petersen-exp} follows immediately from the more general Theorem~\ref{THM:CFZ+tree}, but we think it is instructive to see an independent proof of Theorem~\ref{THM:Petersen-exp} first.

%The proofs are repeatedly application of Equation~(\ref{equ:jumble}).
Let us present first two standard lemmas. We start with the following direct consequence of the definition of a jumbled graph.
\begin{lemma}\label{lem:standard}
Fix a real number $q >1$.
Let $G$ be a $(p,\alpha)$-jumbled graph on $n$ vertices and let $X,Y_1, \ldots, Y_{m} \subseteq V(G)$ be pairwise disjoint subsets. If $|X||Y_i| \geq q^2m\left(\frac{\alpha}{p} \right)^2$ for all $i\in [m]$, then there exists a vertex $x \in X$ with at least $\frac{q-1}{q}p|Y_i|$ neighbours in $Y_i$ for all $i\in [m]$. In particular, $e(X,Y_i) > 0$ for all $i\in [m]$.
\end{lemma}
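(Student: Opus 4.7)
The plan is to use the standard double-counting trick: for each $i$, bound the number of ``bad'' vertices of $X$, i.e.\ those with too few neighbours in $Y_i$, by applying the jumbledness inequality to the pair (bad set, $Y_i$), and then show via the hypothesis $|X||Y_i|\ge q^2 m(\alpha/p)^2$ that the total number of bad vertices is strictly less than $|X|$, leaving some vertex that is good for every $Y_i$ simultaneously.

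More concretely, for each $i\in[m]$ let
\[
B_i = \left\{x \in X : |N(x)\cap Y_i| < \tfrac{q-1}{q}\, p\,|Y_i|\right\}.
\]
By the definition of $B_i$ we have the upper bound $e(B_i,Y_i) \le |B_i|\cdot \tfrac{q-1}{q}p|Y_i|$, while the jumbledness condition \eqref{equ:jumble} applied to $(B_i,Y_i)$ gives the lower bound $e(B_i,Y_i)\ge p|B_i||Y_i| - \alpha\sqrt{|B_i||Y_i|}$. Combining these two inequalities and cancelling a factor of $\tfrac{q-1}{q}p|B_i||Y_i|$ yields $\tfrac{1}{q}p\sqrt{|B_i||Y_i|}\le \alpha$, so
\[
|B_i| \;\le\; \frac{q^2\alpha^2}{p^2\,|Y_i|}.
\]

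The final step is to plug in the hypothesis $|X||Y_i|\ge q^2 m (\alpha/p)^2$, which rearranges to $q^2\alpha^2/(p^2|Y_i|)\le |X|/m$. Summing over $i\in[m]$ gives $\sum_{i=1}^m |B_i|\le |X|$, and in fact the inequality is strict (the jumbledness estimate gives a strict inequality as soon as any $B_i$ is nonempty, or one just argues with the complement of $\bigcup_i B_i$ in $X$). Hence there exists $x\in X\setminus \bigcup_i B_i$, which by definition has at least $\tfrac{q-1}{q}p|Y_i|$ neighbours in each $Y_i$; since $q>1$, this quantity is positive, giving the ``in particular'' claim.

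I do not expect any real obstacle here; the only thing to be a bit careful about is the direction of the inequality in the application of jumbledness (one needs the lower bound $e(B_i,Y_i)\ge p|B_i||Y_i|-\alpha\sqrt{|B_i||Y_i|}$, which is the useful half of \eqref{equ:jumble}) and handling the borderline case to get strict inequality so that the bad sets do not cover all of $X$. The disjointness of the $Y_i$'s is actually not used in the argument itself; it is presumably included in the statement because it is the setting in which the lemma will be applied later.
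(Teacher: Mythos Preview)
Your proposal is correct and follows essentially the same approach as the paper: define the bad set $B_i$ (the paper calls it $X_i$), combine the jumbledness lower bound on $e(B_i,Y_i)$ with the trivial upper bound coming from the definition of $B_i$, and deduce $|B_i|<|X|/m$ so that $\bigcup_i B_i\neq X$. The only cosmetic difference is that the paper phrases the bound $|X_i|<|X|/m$ as a proof by contradiction, whereas you derive it directly; also, the strictness you need comes from the strict inequality in the \emph{definition} of $B_i$ (giving $e(B_i,Y_i)<\tfrac{q-1}{q}p|B_i||Y_i|$ when $B_i\neq\emptyset$), not from the jumbledness estimate itself, but your argument goes through once this is said correctly.
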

\begin{proof}
Define $X_i := \{v\in X \colon |N_{G}(v) \cap Y_i| < \frac{q-1}{q}p|Y_i|\}$ for $i\in [m]$.
It suffices to prove that $|X_i| < \frac{|X|}{m}$ for all $i\in [m]$.
Suppose to the contrary that $|X_i| \ge \frac{|X|}{m}$ for some $i\in [m]$.
By the definition of jumbleness, we get
\begin{align*}
    e(X_i,Y_i)
    \ge p|X_i||Y_i| - \alpha\sqrt{|X_i||Y_i|}
    = \left(p-\frac{\alpha}{\sqrt{|X_i||Y_i|}}\right)|X_i||Y_i|.
\end{align*}
It follows from $|X||Y_i| \geq q^2m\left(\frac{\alpha}{p} \right)^2$ that $\alpha \le \frac{p}{q} \sqrt{\frac{|X||Y_i|}{m}} \le \frac{p}{q} \sqrt{|X_i||Y_i|}$.
Therefore, it follows from the inequality above that
\begin{align*}
    e(X_i,Y_i)
    \ge \frac{q-1}{q} p |X_i||Y_i|,
\end{align*}
but our definition of $X_i$ yields $e(X_i,Y_i) < \frac{q-1}{q} p |X_i||Y_i|$, a contradiction.
%By averaging, there exists a vertex $x \in X$ with at least $p|Y|/2$ neighbours in $Y$.
\end{proof}
The next lemma is a simple cleaning procedure which is useful in problems concerning $(p,\alpha)$-jumbled graphs.

\begin{lemma}\label{lem:cleaning}
Let $G$ be a $(p,\alpha)$-jumbled graph on $n$ vertices. Then, for all sets $X,Y \subseteq V(G)$ such that $|X||Y| \geq 100 (\alpha / p)^2$ the following holds. There exist subsets $X' \subseteq X, Y' \subseteq Y$ respectively of size at least $9|X|/10$ and $9|Y|/10$ such that all $v \in X'$ have $d(v,Y') \geq p|Y'|/10$ and all $u \in Y'$ have $d(u,X') \geq p|X'|/10$.
\end{lemma}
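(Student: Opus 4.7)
The plan is a standard iterative cleaning argument. Start with $X_0 := X$ and $Y_0 := Y$, and at each step remove any vertex that currently witnesses a low degree: if some $v \in X_t$ has $d(v, Y_t) < p|Y_t|/10$, set $X_{t+1} := X_t \setminus \{v\}$, $Y_{t+1} := Y_t$; and symmetrically for the $Y$-side. Terminate when no such vertex remains, and output $X' := X_T$, $Y' := Y_T$. By construction, the output automatically satisfies the claimed minimum-degree conditions, so the entire content of the lemma is the size bound $|X'| \ge 9|X|/10$ and $|Y'| \ge 9|Y|/10$.

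To establish this, I would argue by contradiction using the first time the process is about to breach one of the thresholds. Let $t$ be the earliest step at which either $|X_t| < 9|X|/10$ or $|Y_t| < 9|Y|/10$; by symmetry, assume the step removes a vertex from $X$, so $|X_{t-1}| = 9|X|/10$ (hence $|A| := |X \setminus X_{t-1}| = |X|/10$) and $|Y_{t-1}| \ge 9|Y|/10$. For every $a \in A$, if $a$ was deleted at some earlier time $i$, then $d(a, Y_{i-1}) < p|Y_{i-1}|/10 \le p|Y|/10$; since $Y_{t-1} \subseteq Y_{i-1}$, this also bounds $d(a, Y_{t-1})$. Summing yields $e(A, Y_{t-1}) < |A| \cdot p|Y|/10$.

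Comparing this with the jumbledness lower bound
\begin{align*}
e(A, Y_{t-1}) \ge p|A||Y_{t-1}| - \alpha \sqrt{|A||Y_{t-1}|}
\end{align*}
and using $|Y_{t-1}| \ge 9|Y|/10$ produces, after a short calculation, a lower bound on $\alpha/p$ of the form $\alpha/p \gtrsim \sqrt{|A||Y|}$, which contradicts the hypothesis $|X||Y| \ge 100(\alpha/p)^2$ together with $|A| > |X|/10$. The case where the offending removal is on the $Y$-side is identical by symmetry. The only subtlety — and the place where one has to be slightly careful — is to observe that $Y_{t-1}$ (and not just $Y$) is sandwiched between $9|Y|/10$ and $|Y|$, which is ensured precisely by the choice of $t$ as the \emph{first} violating step; this is what lets both sides of the cleaning procedure be handled in one symmetric argument rather than needing a joint potential-function analysis.
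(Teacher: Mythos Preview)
Your argument is correct and essentially identical to the paper's: both run the greedy deletion process and obtain a contradiction by applying jumbledness to the set of vertices already removed from $X$ against the surviving $Y$-set at the first moment a $9/10$ threshold is breached. One cosmetic inconsistency: you first set $|A| = |X|/10$ and later invoke $|A| > |X|/10$; working with $X_t$ rather than $X_{t-1}$ (so that $|X\setminus X_t| > |X|/10$ genuinely) resolves this, and the arithmetic closes either way.
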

\begin{proof}
Consider the following process. Start with $X_0 := X, Y_0 := Y$ and at step $i \geq 0$, do the following. Take $G_i := G[X_i,Y_i]$ and if there exists a vertex $v \in X_i$ such that $d(v,Y_i) < p|Y_i|/10$ or a vertex $v \in Y_i$ such that $d(v,X_i) < p|X_i|/10$, remove it from $X_i$, $Y_i$ respectively, giving new $X_{i+1},Y_{i+1}$. We claim that this process stops before $|X_i| \leq 9|X|/10$ or $|Y_i| \leq 9|Y|/10$, which would imply that we are done. Indeed, if it did not stop before that, consider the step at which, w.l.o.g., $|X_i| = 9|X|/10$ and $|Y_i| \ge 9|Y|/10$. By construction, every vertex in $X \setminus X_i$ has less than $p|Y|/10 \leq p|Y_i|/9$ neighbours in $Y_i$. So,
$$e(X \setminus X_i,Y_i) \leq p|Y_i||X \setminus X_i|/9 .$$
On the other hand, the definition of a $(p,\alpha)$-jumbled graph implies that
$$e(X \setminus X_i,Y_i) \geq p|Y_i||X \setminus X_i| - \alpha \sqrt{|Y_i||X \setminus X_i|}$$
which is a contradiction since $|X \setminus X_i||Y_i| \geq \frac{1}{10} \cdot \frac{9}{10} \cdot |X||Y| \geq 4 \left(\frac{\alpha}{p} \right)^2$.
\end{proof}

%%%%%%%%%%%%%%%%%%%%%%%%%%%%%%%%%%%%%%%%%
\begin{figure}[htbp]
\centering
\tikzset{every picture/.style={line width=1pt}} %set default line width to 0.75pt

\begin{tikzpicture}[x=0.75pt,y=0.75pt,yscale=-1,xscale=1]
%uncomment if require: \path (0,374); %set diagram left start at 0, and has height of 374

%Shape: Ellipse [id:dp9662606671743235]
\draw   (216.42,287.37) .. controls (171.91,287.57) and (135.61,238.79) .. (135.34,178.41) .. controls (135.06,118.04) and (170.92,68.93) .. (215.42,68.73) .. controls (259.93,68.52) and (296.23,117.3) .. (296.5,177.68) .. controls (296.78,238.06) and (260.92,287.16) .. (216.42,287.37) -- cycle ;
%Shape: Ellipse [id:dp3614246943183663]
\draw   (476.63,337.52) .. controls (413.47,337.81) and (361.95,267.76) .. (361.55,181.06) .. controls (361.16,94.37) and (412.05,23.86) .. (475.21,23.57) .. controls (538.37,23.29) and (589.89,93.33) .. (590.29,180.03) .. controls (590.68,266.72) and (539.8,337.23) .. (476.63,337.52) -- cycle ;
%Shape: Circle [id:dp46624419259934924]
\draw   (186,118) .. controls (186,104.19) and (197.19,93) .. (211,93) .. controls (224.81,93) and (236,104.19) .. (236,118) .. controls (236,131.81) and (224.81,143) .. (211,143) .. controls (197.19,143) and (186,131.81) .. (186,118) -- cycle ;
%Shape: Circle [id:dp14921646143532508]
\draw   (190,236) .. controls (190,222.19) and (201.19,211) .. (215,211) .. controls (228.81,211) and (240,222.19) .. (240,236) .. controls (240,249.81) and (228.81,261) .. (215,261) .. controls (201.19,261) and (190,249.81) .. (190,236) -- cycle ;
%Shape: Circle [id:dp2709605091749041]
\draw   (405,249) .. controls (405,235.19) and (416.19,224) .. (430,224) .. controls (443.81,224) and (455,235.19) .. (455,249) .. controls (455,262.81) and (443.81,274) .. (430,274) .. controls (416.19,274) and (405,262.81) .. (405,249) -- cycle ;
%Shape: Circle [id:dp13861803921594595]
\draw   (492,248) .. controls (492,234.19) and (503.19,223) .. (517,223) .. controls (530.81,223) and (542,234.19) .. (542,248) .. controls (542,261.81) and (530.81,273) .. (517,273) .. controls (503.19,273) and (492,261.81) .. (492,248) -- cycle ;
%Shape: Circle [id:dp6547136872680792]
\draw   (403,148) .. controls (403,134.19) and (414.19,123) .. (428,123) .. controls (441.81,123) and (453,134.19) .. (453,148) .. controls (453,161.81) and (441.81,173) .. (428,173) .. controls (414.19,173) and (403,161.81) .. (403,148) -- cycle ;
%Shape: Circle [id:dp9617457337443045]
\draw   (491,150) .. controls (491,136.19) and (502.19,125) .. (516,125) .. controls (529.81,125) and (541,136.19) .. (541,150) .. controls (541,163.81) and (529.81,175) .. (516,175) .. controls (502.19,175) and (491,163.81) .. (491,150) -- cycle ;
%Shape: Ellipse [id:dp23159980104946398]
\draw   (388,249.5) .. controls (388,224.37) and (426.28,204) .. (473.5,204) .. controls (520.72,204) and (559,224.37) .. (559,249.5) .. controls (559,274.63) and (520.72,295) .. (473.5,295) .. controls (426.28,295) and (388,274.63) .. (388,249.5) -- cycle ;
%Straight Lines [id:da7427449710126133]
\draw    (62,176) -- (215.92,178.05) ;
%Straight Lines [id:da2442194483492881]
\draw    (444,72) -- (504,72) ;
%Straight Lines [id:da2532248342625114]
\draw    (62,176) -- (211,118) ;
%Straight Lines [id:da2147803345828203]
\draw    (62,176) -- (215,236) ;
%Curve Lines [id:da9335459880804347]
\draw    (211,118) .. controls (237,69) and (395,38) .. (444,72) ;
%Curve Lines [id:da12090079952608956]
\draw    (211,118) .. controls (248,149) and (378,163) .. (428,148) ;
%Curve Lines [id:da4513665571020591]
\draw    (215,236) .. controls (228.08,210.95) and (451,91) .. (504,72) ;
%Curve Lines [id:da5329008401556488]
\draw   (215,236) .. controls (233.5,251.5) and (322,239) .. (380,216) .. controls (438,193) and (490,173) .. (516,150) ;
%Curve Lines [id:da8833524804246176]
\draw    (215.92,178.05) .. controls (259.92,192.05) and (380,264) .. (430,249) ;
%Curve Lines [id:da3608938856371524]
\draw    (215.92,178.05) .. controls (247,173) and (331,183) .. (375,192) .. controls (419,201) and (432.5,207.44) .. (464,219) .. controls (495.5,230.56) and (505,240) .. (517,248) ;
%Straight Lines [id:da7850415101963177]
\draw    (428,148) -- (516,150) ;
%Straight Lines [id:da0313269194048067]
\draw    (428,148) -- (517,248) ;
%Straight Lines [id:da6377283050760938]
\draw    (430,249) -- (516,150) ;
%Curve Lines [id:da37086328727966555]
\draw    (430,249) .. controls (380,221) and (377,90) .. (444,72) ;
%Curve Lines [id:da37825718826157684]
\draw    (517,248) .. controls (585,231) and (550,78) .. (504,72) ;
%%%%%%%%%%%%%%%%%%%%%%%%%%%%%%%%%%%%%%%
\draw[line width=2pt] (62,176)  circle [radius = 1.5];
\draw[line width=2pt] (215.92,178.05)  circle [radius = 1.5];
\draw[line width=2pt] (215,236)  circle [radius = 1.5];
\draw[line width=2pt] (211,118)  circle [radius = 1.5];
\draw[line width=2pt] (444,72)  circle [radius = 1.5];
\draw[line width=2pt] (428,148)  circle [radius = 1.5];
\draw[line width=2pt] (504,72) circle [radius = 1.5];
\draw[line width=2pt] (516,150)  circle [radius = 1.5];
\draw[line width=2pt] (430,249)  circle [radius = 1.5];
\draw[line width=2pt] (517,248)  circle [radius = 1.5];
%%%%%%%%%%%%%%%%%%%%%%%%%%%%%%%%%%%%%%%
% Text Node
\draw (40,170) node [anchor=north west][inner sep=0.75pt]   [align=left] {$v_1$};
% Text Node
\draw (204,184) node [anchor=north west][inner sep=0.75pt]   [align=left] {$v_3$};
% Text Node
\draw (201,145) node [anchor=north west][inner sep=0.75pt]   [align=left] {$Z_2$};
% Text Node
\draw (205,262) node [anchor=north west][inner sep=0.75pt]   [align=left] {$Z_4$};
% Text Node
\draw (436,55) node [anchor=north west][inner sep=0.75pt]   [align=left] {$v_6$};
% Text Node
\draw (494,55) node [anchor=north west][inner sep=0.75pt]   [align=left] {$v_9$};
% Text Node
\draw (416,174) node [anchor=north west][inner sep=0.75pt]   [align=left] {$Z_5$};
% Text Node
\draw (508,176) node [anchor=north west][inner sep=0.75pt]   [align=left] {$Z_{10}$};
% Text Node
\draw (466,302) node [anchor=north west][inner sep=0.75pt]   [align=left] {$Z_{7,8}$};
% Text Node
\draw (434,272) node [anchor=north west][inner sep=0.75pt]   [align=left] {$Z_8$};
% Text Node
\draw (495,272) node [anchor=north west][inner sep=0.75pt]   [align=left] {$Z_7$};

\end{tikzpicture}

\caption{An auxiliary picture for the proof of Theorem~\ref{THM:Petersen-jumbled-2nd}.}
\end{figure}
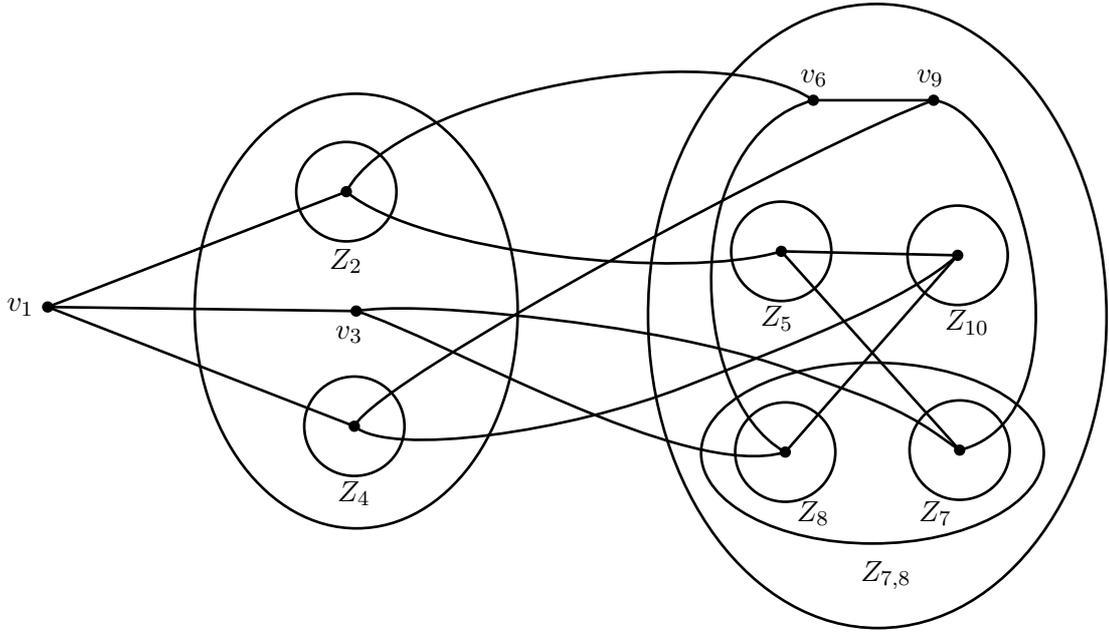

To prove Theorem~\ref{THM:Petersen-exp}, it suffices  to show that for every $C_1>0$ there exist $C_2>0$ and $n_0>0$ such that if $n>n_0$ and $G$ is an $n$-vertex graph that is $(p, \alpha)$-jumbled with $\alpha \le C_1\sqrt{p n}$ and $pn \ge C_2 n^{2/3}$, then $G$ contains the Peterson graph.
This follows from the following theorem.

\begin{theorem}\label{THM:Petersen-jumbled-2nd}
Let $G$ be a $(p,\alpha)$-jumbled graph on $n$ vertices such that $\alpha \leq p^2n/200$ and $p > 10n^{-1/3}$. If $n$ is sufficiently large, then $G$ contains the Petersen graph.
\end{theorem}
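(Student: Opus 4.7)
The plan is to embed the Petersen graph directly into $G$. Label the Petersen vertices $1,\ldots,10$ as in Figure~1; the key structural observation is that the four vertices $\{1,3,6,9\}$ induce only the two edges $1{-}3$ and $6{-}9$, the remaining six vertices $\{2,4,5,7,8,10\}$ induce a tree (with edges $2{-}5,\,5{-}7,\,5{-}10,\,4{-}10,\,8{-}10$), and the cross-edges between the two groups are $1{-}2,\,1{-}4,\,3{-}7,\,3{-}8,\,6{-}2,\,6{-}8,\,9{-}4,\,9{-}7$. So the strategy is to first choose a backbone $v_1,v_3,v_6,v_9$ in $G$ together with four large common-neighborhood sets $Z_2,Z_4,Z_7,Z_8$, and then locate the remaining six vertices via these sets.

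\textbf{Step 1 (backbone and candidate sets).} Pick any $v_1\in V(G)$ with $d(v_1)\ge pn/2$ and any $v_3\in N(v_1)$ with $d(v_3)\ge pn/2$; both exist because by jumbledness only $O(\alpha^2/(p^2n))$ vertices have degree below $pn/2$, a negligible quantity. A calculation in the flavor of the proof of Lemma~\ref{lem:cleaning} then shows that the set
\[
T=\bigl\{u\in V(G):|N(u)\cap N(v_1)|\ge p^2n/4\ \text{and}\ |N(u)\cap N(v_3)|\ge p^2n/4\bigr\}
\]
satisfies $|T|\ge(1-o(1))n$, and jumbledness forces an edge inside $T$; pick such an edge $v_6v_9$ disjoint from $\{v_1,v_3\}$. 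Define
\[
Z_2=N(v_1)\cap N(v_6),\quad Z_4=N(v_1)\cap N(v_9),\quad Z_7=N(v_3)\cap N(v_9),\quad Z_8=N(v_3)\cap N(v_6),
\]
which by the definition of $T$ each have size at least $p^2n/4$.

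\textbf{Step 2 (central edge and leaves).} For each $i\in\{2,4,7,8\}$, the same kind of jumbledness argument shows that all but at most $n/2500$ vertices $u\in V(G)$ satisfy $|N(u)\cap Z_i|\ge p|Z_i|/2$. Let $A$ (resp.\ $B$) be the set of vertices satisfying this for both $Z_2,Z_7$ (resp.\ $Z_4,Z_8$); each has size $\ge n/2$. Jumbledness once more yields an edge $v_5v_{10}$ with $v_5\in A$, $v_{10}\in B$ and $\{v_5,v_{10}\}\cap\{v_1,v_3,v_6,v_9\}=\emptyset$. Since $p>10n^{-1/3}$ gives $p^3n>10^3$, we have $|N(v_5)\cap Z_2|\ge p|Z_2|/2\ge p^3n/8>100$, so we can pick $v_2\in N(v_5)\cap Z_2$ distinct from all previously chosen vertices; analogously pick $v_7\in N(v_5)\cap Z_7$, $v_4\in N(v_{10})\cap Z_4$, and $v_8\in N(v_{10})\cap Z_8$. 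A direct check against the fifteen Petersen edges confirms the embedding.

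The main technical obstacle is Step~1, where four common-neighborhoods attached to four different backbone vertices must simultaneously be large. The trick is to fix $v_1$ and $v_3$ first, to observe that the set of good candidates for $(v_6,v_9)$ --- those whose common neighborhoods with both $v_1$ and $v_3$ are of the expected order $p^2n$ --- contains all but a negligible fraction of $V(G)$, and then to invoke jumbledness one more time to extract an edge from this set. Once the backbone is in place, the two halves of the remaining configuration decouple around the central edge $v_5v_{10}$ and can be completed independently by the same two-step (``good set, then edge in it'') pattern.
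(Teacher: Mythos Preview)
Your proof is correct and uses the same underlying decomposition as the paper: backbone $\{1,3,6,9\}$ (inducing two disjoint edges) followed by the induced tree on $\{2,4,5,7,8,10\}$. The paper explicitly notes that this is the ordering $(1,3,6,9,2,4,5,7,8,10)$ with $k=5$ in Theorem~\ref{THM:CFZ+tree}.

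The execution differs in two minor respects. First, the paper embeds the tree in the order $v_7,v_8,v_5,v_{10},v_2,v_4$: it fixes $v_7\in Z_7$ and $v_8\in Z_8$ (each with many neighbours into large sets $Z_5,Z_{10}$), then finds the edge $v_5v_{10}$, and finally hangs $v_2,v_4$ off $v_5,v_{10}$. You instead find the central tree edge $v_5v_{10}$ first and then pick all four leaves $v_2,v_4,v_7,v_8$ at the end; this is more symmetric and arguably cleaner. Second, the paper works with a disjoint partition $X\cup Y$ of $V(G)$ and repeatedly invokes the cleaning Lemma~\ref{lem:cleaning} to keep the candidate sets disjoint throughout, whereas you let the sets $Z_2,Z_4,Z_7,Z_8$ overlap freely and rely on the fact that each final candidate set has size $\ge p^3n/8>100$, so one can always avoid the (at most nine) already-chosen vertices. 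Both ways buy the same conclusion; yours trades the cleaning-lemma bookkeeping for a short direct jumbledness calculation at each step.
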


\textbf{Remark.}
Theorem~\ref{THM:Petersen-jumbled-2nd} and some simple calculations show that for every $C>0$ if $n$ is sufficiently large and $G$ is an $n$-vertex $(p,\alpha)$-jumbled graph with $\alpha \le C\sqrt{pn}$ and $pn \ge \left(200C+10\right)^{2/3} n^{2/3}$, then $G$ contains the Petersen graph as a subgraph.
%%%%%%%%%%%%
\begin{proof}
Let $G$ be a $(p,\alpha)$-jumbled graph on $n$ vertices such that $\alpha \leq p^2n/200$, so that $p^2n^2/5000 \ge 4(\alpha/p)^2$. We will find an embedding of the Petersen graph with vertices $v_1, \ldots, v_{10}$ which correspond to the labelling in the right drawing of Figure \ref{Fig:Petersen}. First, let $v_1$ be a vertex of $G$ of degree at least  $pn/2$ (which is guaranteed by Lemma \ref{lem:standard} with $q=2$) and let $X$ denote a set of $pn/2$ of its neighbours. Let also $Y$ denote the rest of the vertices, that is, $Y := V \setminus (\{v_1\} \cup X)$, which is of size at least $n/2$. By Lemma \ref{lem:cleaning}, there exist subsets $X' \subseteq X, Y' \subseteq Y$ of size at least $9pn/20$ and $9n/20$ respectively, with the properties as in the statement. Let $v_3 \in X'$ and consider its neighbourhood in $Y'$, which is guaranteed to be of size at least $9pn/200$, and take a subset $Z_{7,8} \subseteq Y'$ of it of size precisely $9pn/200$. Now, again applying Lemma \ref{lem:cleaning} with $Z_{7,8}$ and $Y' \setminus Z_{7,8}$ we have that there are at least $\frac{9}{10}|Y'\setminus Z_{7,8}| \ge \frac{9}{10}\left(\frac{9}{20}n-\frac{9}{200}pn\right)\ge \frac{2}{5}n$ vertices in $Y' \setminus Z_{7,8}$ with at least $9p|Z_{7,8}|/200 \geq 81p^2n/40000 > p^2n/500$ neighbours in $Z_{7,8}$. Let $v_6v_9$ be an edge contained in these $2n/5$ vertices, which is guaranteed by Lemma \ref{lem:standard}, so that both $v_6,v_9$ have at least $p^2n/500$ neighbours in $Z_{7,8}$. Let $Z_7$ be a set of $p^2n/1000$ such neighbours of $v_9$ and $Z_8$ be a set of $p^2n/1000$ such neighbours of $v_6$ so that $Z_7 \cap Z_8 = \emptyset$.

Now, recall that since $v_6,v_9 \in Y'$, both of them have at least $p|X'|/10 \geq 9p^2n/200 \ge p^2n/25$ neighbours in $X'$ and so, let $Z_2 \subseteq X'$ be a set of $p^2n/50$ such neighbours of $v_6$ and $Z_4$ be a set of $p^2n/50$ such neighbours of $v_9$ so that $Z_2 \cap Z_4 = \emptyset$.
Let $Y'' = Y' \setminus (Z_{7,8} \cup \{v_6,v_9\})$.
By Lemma \ref{lem:cleaning} applied to $Z_2,Y''$ and $Z_4,Y''$ there are disjoint subsets $Z_5,Z_{10} \subseteq Y''$ of size at least $\frac{1}{2}\frac{9}{10}|Y''|
\ge \frac{9}{20}\left(\frac{9}{20}n-\frac{9}{200}pn-2\right) \ge \frac{n}{5}$ such that every vertex in $Z_5$ has at least $\frac{9p}{100}\frac{p^2 n}{50} \ge 1$ neighbours in $Z_2$ and every vertex in $Z_{10}$ has at least $\frac{9p}{100}\frac{p^2 n}{50} \ge 1$ neighbours in $Z_4$.
Here we used that $p \ge 10n^{-1/3}$.

Furthermore, apply Lemma \ref{lem:standard} to $Z_7,Z_5$ and $Z_8, Z_{10}$ to find a vertex $v_7 \in Z_7$ with at least $p|Z_5|/2 \geq pn/10$ neighbours in $Z_5$ - let $Z'_5 \subseteq Z_5$ denote the set of neighbors of $v_7$ in $Z_5$; and a vertex $v_8 \in Z_8$ with at least $p|Z_{10}|/2 \geq pn/10$ neighbours in $Z_{10}$ - let $Z'_{10} \subseteq Z_{10}$ denote the set of neighbors of $v_8$ in $Z_{10}$. Recall now that $v_1v_3,v_7v_3,v_8v_3,v_7v_9,v_8v_6, v_6v_9$ are all edges. To finish, we note that if there exists an edge in $E[Z'_5,Z'_{10}]$, then the Petersen graph can be embedded. Indeed let $v_5v_{10}$ be such an edge with $v_5 \in Z'_5$ and $v_{10} \in Z'_{10}$. In particular, we have that $v_5v_7, v_{10}v_8$ are edges. Further, by the definition of $Z_{5},Z_{10}$, there exist $v_2 \in Z_2, v_4 \in Z_4$ such that $v_2v_5$ and $v_4v_{10}$ are edges. Furthermore, by definition, we also have that $v_2v_6,v_2v_1,v_4v_9,v_4v_1 $ are edges and thus, one can check that all the edges in the Petersen graph are present. To conclude then, note that there exists an edge in $E[Z'_5,Z'_{10}]$ by Lemma \ref{lem:standard} since $|Z'_5||Z'_{10}| \geq p^2n^2/100 > 4(\alpha/p)^2$.
\end{proof}

%%%%%%%%%%%%%%%%%%%%%%%%%%%%%%%%%%%%%%%%%%%%%%%%%%%%%%%
\subsection{Upper bound for general graphs}\label{SUBSEC:general-bound-CFZ}
In this section, we formalize our strategy used in the proof of Theorem~\ref{THM:Petersen-exp} by proving the more general Theorem~\ref{THM:CFZ+tree}. Before proceeding with the proof, we note  that Theorem~\ref{THM:Petersen-exp} was proved by embedding the vertices of $\textbf{P}$ in the order $(1,3,6,9,2,4,5,7,8,10)$. In fact, Theorem~\ref{THM:Petersen-exp}
follows from Theorem~\ref{THM:CFZ+tree} by letting the ordering of $V(\textbf{P})$ be $(v_1, \ldots, v_{10}) = (1,3,6,9,2,4,5,7,8,10)$ and choosing $k=5$.

Let us prove the following embedding lemma for forests first.

%%%%%%%%%%%%%%%%%%%%%%%%%%%%%%%%%%%%%%
\begin{lemma}\label{LEMMA:embed-tree}
    Suppose that $T$ is a forest on $[m]$ and $G$ is an $n$-vertex $(p,\alpha)$-jumbled graph.
    Let $X_1, \ldots, X_{m} \subset V(G)$ be nonempty pairwise disjoint subsets of $V(G)$ that satisfy
    \begin{align*}
        |X_i||X_j| \ge 2^m \left(\frac{\alpha}{p}\right)^2
    \end{align*}
    for all edges $ij$ in $T$.
    Then there exists an embedding of $f\colon T \to G$ such that $f(i) \in X_i$ for all $i\in [m]$.
\end{lemma}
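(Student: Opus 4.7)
The plan is to proceed by induction on $m$ using a leaf-peeling strategy. The base case covers all forests with no edges (including $m=1$): since the $X_i$ are nonempty and pairwise disjoint, I simply choose any vertex in each $X_i$. For the inductive step I assume $T$ contains at least one edge, pick a leaf $v$ of $T$ with unique neighbor $u$, and restrict $X_u$ to
\[
X_u' := \bigl\{\, x \in X_u \,:\, N_G(x) \cap X_v \neq \emptyset \,\bigr\},
\]
that is, to the candidates for $u$ that can still be extended to $v$.

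The jumbledness of $G$ controls how much is lost in this restriction. Setting $X_u'' := X_u \setminus X_u'$, I have $e_G(X_u'', X_v) = 0$, so \eqref{equ:jumble} forces $p|X_u''||X_v| \le \alpha\sqrt{|X_u''||X_v|}$, i.e.\ $|X_u''||X_v| \le (\alpha/p)^2$. Combined with the hypothesis $|X_u||X_v| \ge 2^m(\alpha/p)^2$, this yields $|X_u''| \le |X_u|/2^m \le |X_u|/2$, so $|X_u'| \ge |X_u|/2$.

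I then apply the inductive hypothesis to the forest $T' = T - v$ on vertex set $[m]\setminus\{v\}$ with the candidate sets $\{X_i\}_{i \neq v}$ after replacing $X_u$ by $X_u'$. The required bound $|X_i||X_j| \ge 2^{m-1}(\alpha/p)^2$ for every edge $ij \in T'$ is preserved: for edges not incident to $u$ it is immediate with a factor of $2$ to spare, while for any edge $uj$ of $T'$ one has $|X_u'||X_j| \ge |X_u||X_j|/2 \ge 2^{m-1}(\alpha/p)^2$ by the shrinkage estimate. The induction returns an embedding $f\colon T' \to G$ with $f(u) \in X_u'$; by the definition of $X_u'$ some neighbor of $f(u)$ lies in $X_v$, and I set $f(v)$ to be such a neighbor. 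Pairwise disjointness of the $X_i$ ensures $f(v)$ is distinct from every previously placed image.

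The only real bookkeeping issue—and the reason the hypothesis carries the constant $2^m$—is the cumulative factor-of-$2$ loss over the (up to) $m-1$ peeling steps. Since the exponent in the threshold decreases by exactly $1$ at each peel, matching one halving of the relevant candidate set, the induction closes cleanly and the hypothesis is just enough to guarantee that every candidate set remains nonempty down to the base case. The argument extends verbatim to a disjoint union of trees, since a leaf in any component may be peeled independently.
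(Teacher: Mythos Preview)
Your proof is correct and follows essentially the same leaf-peeling induction as the paper: restrict $X_u$ to the candidates having a neighbor in $X_v$, show that at least half of $X_u$ survives, and apply the inductive hypothesis to $T-v$ with threshold $2^{m-1}(\alpha/p)^2$. The only cosmetic differences are that you invoke the jumbledness inequality \eqref{equ:jumble} directly rather than through Lemma~\ref{lem:standard}, and you handle the edgeless base case a bit more explicitly than the paper does.
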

\begin{proof}
    We prove this lemma by induction on $m$. The base case $m=1$ is clear since $X_1$ is nonempty. So we may assume that $m \ge 2$.
    Without loss of generality, we may assume that the vertex $m$ is a leaf of $T$ and the vertex $m-1$ is its neighbor in $T$. Let $T' := T-m$.
    Let
    \begin{align*}
        X'_{m-1} := \{v\in X_{m-1} \colon |N_{G}(v) \cap X_{m}| \ge 1\}
    \end{align*}
    We claim that $|X'_{m-1}| \ge |X_{m-1}|/2$.
    Indeed, suppose to the contrary that $|X'_{m-1}| < |X_{m-1}|/2$.
    %Then let $Y_{m-1} \subset X_{m-1}\setminus X'_{m-1}$ be a subset of size $|X_{m-1}|/2$.
     Then  we would have $|X_{m}||X_{m-1}\setminus X'_{m-1}| \ge |X_{m}||X_{m-1}|/2 \ge 2^{m-1}(\alpha/p)^2 \ge 2(\alpha/p)^2$, it follows from Lemma~\ref{lem:standard} (with $q=\sqrt{2}>1$) that $e(X_m, X_{m-1}\setminus X'_{m-1})>0$.
    This contradicts the fact that $e(X_m, X_{m-1}\setminus X'_{m-1}) =0$. Hence, $|X'_{m-1}| \ge |X_{m-1}|/2$.

    Now apply the induction hypothesis to the sets $X_1, \ldots, X_{m-2}, X'_{m-1}$, we obtain an embedding $f \colon T' \to G$ such that $f(i) \in X_i$ for $i\in [m-2]$ and $f(m-1) \in X'_{m-1}$. By the definition of $X'_{m-1}$, there exists $v \in X_{m}$ such that $\{f(m-1), v\} \in G$. Hence we can extend $f$ to get an embedding of $T$ to $G$ by setting $f(m) = v$.  This completes the proof of Lemma~\ref{LEMMA:embed-tree}.
\end{proof}
%%%%%%%%%%%%%%%%%%%%%%%%%%%%%%%%%%%%%%

Now we are ready to prove Theorem~\ref{THM:CFZ+tree}.

\begin{proof}[Proof of Theorem~\ref{THM:CFZ+tree}]
Let $G$ be a $(p, \alpha)$-jumbled graph with $\alpha < \frac{p^{d+1}n}{C}$ for an arbitrarily large constant $C > m4^m$. We will show that $G$ contains a copy of $F$. This implies that $\mathrm{exp}(F) \leq 1 - \frac{1}{2d+1}$. Indeed, if $C_1>0$ and $C_2 > (C_1C)^{\frac{2}{d+1}}$ and $G$ is $(p, \alpha)$-jumbled with $p > C_2 n^{-\frac{1}{2d+1}}$ and $\alpha < C_1\sqrt{pn}$, then a short calculation shows that $\alpha < \frac{p^{d+1}n}{C}$ and our result will imply the theorem.

Consider an ordering $v_1,v_2, \ldots, v_m$ of the vertices of $F$ and a $1 \leq k \leq m$ such that:
\begin{enumerate}[label=(\alph*)]
    \item  $F[\{v_{k}, \ldots, v_m\}]$ is a forest,
    \item for all edges $v_iv_j \in F$ with $i<j$, $i<k$, we have $N_{< i}(v_i) + N_{< i}(v_j) \leq 2d$, and
    \item for all edges $v_iv_j \in F$ with $k \leq i < j$, we have $N_{<k}(v_i) + N_{<k}(v_j) \leq 2d$.
\end{enumerate}
Let us denote $F[\{v_1, \ldots, v_{k-1}\}]$ by $F_1$ and $F[\{v_k, \ldots, v_m\}]$ by $F_2$. We will first embed a copy of $F_1$ using (b). At the same time, we will also ensure by (c), that the candidate sets for the vertices $v_k, \ldots, v_m$ are still large enough so that the forest $F_2$ can be embedded in them, thus giving an embedding of $F$.

Take a partition $V(G) = V_1 \cup \cdots \cup V_{m}$ such that $|V_i| \ge \frac{n}{2m}$ for all $i\in [m]$.

\begin{claim}\label{CLAIM:size-Xi}
Let $s\in [k-1]$. Then
there exist vertices $u_j \in V_j$ for all $j \in [s]$ such that $G[\{u_1, \ldots, u_{s}\}]$ contains a copy of $F[\{v_1, \ldots, v_{s}\}]$ and
\begin{align*}
V_{i,s}:=V_i \cap \left(\bigcap_{j \le s \colon v_iv_j\in F} N_G(u_j)\right)
\end{align*}
satisfies the inequality
$$|V_{i,s}| \geq \frac{p^{\left|N_{\le s}(v_i) \right|} |V_i|}{2^{s}}
\ge \frac{p^{\left|N_{\le s}(v_i) \right|} n}{m2^{m}}$$
 for all $i\in [s+1,m]$.
\end{claim}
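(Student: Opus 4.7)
The plan is to prove Claim~\ref{CLAIM:size-Xi} by induction on $s \in [k-1]$, with each inductive step being a single application of Lemma~\ref{lem:standard} with parameter $q=2$. The overall structure mirrors the embedding strategy used in the proof of Theorem~\ref{THM:Petersen-jumbled-2nd}: at each stage we pick a vertex that is simultaneously adjacent to all the previously chosen images it needs to be adjacent to, and that keeps the candidate neighbourhoods of all later vertices from shrinking too fast.

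For the base case $s=1$, I would apply Lemma~\ref{lem:standard} with $X := V_1$ and $Y_i := V_i$ ranging over all $i \in [2,m]$ such that $v_1 v_i \in F$. Since $|V_j| \ge n/(2m)$ for every $j$ and $\alpha/p < p^d n/C$ with $C > m 4^m$, the hypothesis $|X||Y_i| \ge 4m(\alpha/p)^2$ is trivially satisfied. The lemma produces $u_1 \in V_1$ with $|N_G(u_1) \cap V_i| \ge p|V_i|/2$ for every such $i$, which immediately yields the claimed lower bound on $|V_{i,1}|$ in both cases $v_1 v_i \in F$ and $v_1 v_i \notin F$.

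For the inductive step, suppose $u_1, \ldots, u_s$ have already been found with the stated properties. By the very definition of $V_{s+1,s}$, any vertex $u_{s+1} \in V_{s+1,s}$ is adjacent to every previously chosen $u_j$ with $j \le s$ and $v_j v_{s+1} \in F$, so picking $u_{s+1}$ inside $V_{s+1,s}$ automatically extends the embedding to $F[\{v_1, \ldots, v_{s+1}\}]$. To preserve the inductive size bound for the later indices, apply Lemma~\ref{lem:standard} with $X := V_{s+1,s}$ and $Y_i := V_{i,s}$ ranging over all $i > s+1$ with $v_{s+1} v_i \in F$. Since $s+1 \le k-1 < k$, condition (b) of Theorem~\ref{THM:CFZ+tree} applies to the edge $v_{s+1}v_i$ and gives
\[
|N_{\le s}(v_{s+1})| + |N_{\le s}(v_i)| \;=\; |N_{<s+1}(v_{s+1})| + |N_{<s+1}(v_i)| \;\le\; 2d,
\]
which combined with the inductive bounds yields $|X||Y_i| \ge p^{2d} n^2/(m 2^m)^2$. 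The choice $C > m 4^m$ together with $\alpha/p < p^d n/C$ then ensures $|X||Y_i| \ge 4m(\alpha/p)^2$, so Lemma~\ref{lem:standard} produces $u_{s+1} \in V_{s+1,s}$ with $|N_G(u_{s+1}) \cap V_{i,s}| \ge p|V_{i,s}|/2$ for every relevant $i$. Defining $V_{i,s+1} := V_{i,s} \cap N_G(u_{s+1})$ when $v_i v_{s+1} \in F$ and $V_{i,s+1} := V_{i,s}$ otherwise, the new size bound follows directly (noting that $s+1 \le m$ guarantees the weaker $n/(m 2^m)$ form remains valid as well).

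The only delicate point in this plan is the inductive verification of $|X||Y_i| \ge 4m(\alpha/p)^2$. This is precisely where condition (b) of Theorem~\ref{THM:CFZ+tree} enters: it converts the sum of backward degrees at step $s+1$ into a guaranteed factor of $p^{2d}$ in the product of candidate set sizes, which is exactly what the jumbledness assumption $\alpha < p^{d+1} n /C$ can absorb. Everything else is routine bookkeeping, driven by the fact that each application of Lemma~\ref{lem:standard} costs at most a factor of $p/2$ per affected candidate set.
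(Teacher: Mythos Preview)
Your proposal is correct and follows essentially the same approach as the paper's own proof: induction on $s$, with each step applying Lemma~\ref{lem:standard} (with $q=2$) to the current candidate set $V_{s+1,s}$ against the candidate sets $V_{i,s}$ for the forward neighbours of $v_{s+1}$, using condition~(b) to bound $|N_{\le s}(v_{s+1})|+|N_{\le s}(v_i)|\le 2d$ and hence $|X||Y_i|\ge p^{2d}n^2/(m^2 4^m)$. The only cosmetic difference is that the paper indexes the inductive step as $s-1\mapsto s$ rather than $s\mapsto s+1$, and records the slightly stronger bound $|X||Y_i|>2^m(\alpha/p)^2$ in preparation for the later use of Lemma~\ref{LEMMA:embed-tree}.
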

\begin{proof}
For every $j\in [m]$ let $I_{j}:= \{\ell \in [j+1,m] \colon v_j v_{\ell} \in F\}$.
    The proof is by induction on $s$.
    For the base case $s=1$, first observe that
    \begin{align*}
        |V_1||V_j| \ge \left(\frac{n}{2m}\right)^2 \ge 4m \left(\frac{\alpha}{p}\right)^2
    \end{align*}
for all $j \in I_1$.
    Hence we can apply Lemma~\ref{lem:standard} to $V_1$ and $V_j$ for all $j \in I_1$ with $q=2$ to obtain a vertex $u_1 \in V_1$ such that $d(u_1, V_j) \ge p |V_j|/2$ for all $j \in I_1$.
    Now suppose that $s \ge 2$.
    Apply the induction hypothesis to get $u_i \in V_i$ for $i\in [s-1]$ such that $G[\{u_1, \ldots, u_{s-1}\}]$ contains a copy of $F[\{v_1, \ldots, v_{s-1}\}]$ and for every $i\in [s,m]$ the set $U_i := V_{i, s-1}$
    satisfies
\begin{align}\label{equ:Ui-size}
\left|U_i\right|
\geq \frac{p^{\left|N_{\le s-1}(v_i) \right|} |V_i|}{2^{s-1}}.
\end{align}

Observe that for every $j \in I_s$ we have
\begin{align}\label{equ:UiUj-induction}
    |U_s||U_j|
     \ge \frac{p^{\left|N_{\le s-1}(v_s) \right|} n}{m2^{m}} \cdot \frac{p^{\left|N_{\le s-1}(v_j) \right|} n}{m2^{m}}
     = \frac{p^{\left|N_{\le s-1}(v_s) \right|+ \left|N_{\le s-1}(v_j) \right|} n}{m^22^{2m}}
     & \ge \frac{p^{2d} n^2}{m^22^{2m}} \notag \\
     & > 2^m \left(\frac{\alpha}{p} \right)^2.
\end{align}
In the second last inequality we used  (b), and
in the last inequality we used the assumption that $\alpha \le p^{d+1}n/(m4^m)$.
So we may apply Lemma~\ref{lem:standard} to $U_s$ and $U_j$ for all $j \in I_s$ with $q=2$ and obtain $u_{s}\in U_{s}$ such that $d(u_s, U_j) \ge p|U_j|/2$ for all $j\in I_s$.
Now by (\ref{equ:Ui-size}), for every $i\in I_{s}$ we have
\begin{align*}
|V_{i,s}|\ge \left|U_i \cap N(u_s) \right|
\ge \frac{p}{2}\frac{p^{\left|N_{\le s-1}(v_i) \right|} |V_i|}{2^{s-1}}
= \frac{p^{\left|N_{\le s}(v_i) \right|} |V_i|}{2^{s}}.
\end{align*}
On the other hand, by (\ref{equ:Ui-size}), for every $i\in [s+1, m]\setminus I_{s}$, we have
\begin{align*}
|V_{i,s}|
= \left|U_i\right|
\geq \frac{p^{\left|N_{\le s-1}(v_i) \right|} |V_i|}{2^{s-1}}
\ge \frac{p^{\left|N_{\le s}(v_i) \right|} |V_i|}{2^{s}}.
\end{align*}
Finally, it is clear that $G[\{u_1, u_2, \ldots, u_{s}\}]$ contains a copy of $F[\{v_1, v_2, \ldots, v_{s}\}]$,  so the proof of the claim is complete.
\end{proof}

Applying Claim~\ref{CLAIM:size-Xi} with $s=k-1$ we obtain $u_j\in V_j$ for $j\in [k-1]$ such that $G[\{u_1, \ldots, u_{k-1}\}]$ contains a copy of $F_1$ and
\begin{align*}
   |V_{i,k-1}| \ge \frac{p^{\left|N_{< k}(v_i) \right|} n}{m2^{m}}
\end{align*}
for all $i\in [k,m]$.
Now that the first portion of the graph has been embedded, it remains only to embed a forest on the given candidate sets $X_i:=V_{i,k-1}$.
If we find  an embedding $f\colon F_2 \to G$ with $f(v_i) \in X_i$ for all $i\in [k,m]$,
%a copy $u_k, \ldots, u_m$ of $F_2$ such that $u_i \in X_i$ for all $i$,
then $G[\{u_1 ,\ldots,u_{k-1}, f(v_k), \ldots, f(v_m)\}]$ contains a copy of $F$.
Similar to (\ref{equ:UiUj-induction}), by (c) and Claim~\ref{CLAIM:size-Xi}, for every $\{v_i,v_j\}\in F_2$ we have
\begin{align*}
    |X_i||X_j|
     \ge \frac{p^{\left|N_{< k}(v_i) \right|} n}{m2^{m}} \cdot \frac{p^{\left|N_{< k}(v_j) \right|} n}{m2^{m}}
     = \frac{p^{\left|N_{< k}(v_i) \right|+ \left|N_{< k}(v_j) \right|} n}{m^22^{2m}}
     \ge \frac{p^{2d} n^2}{m^22^{2m}}
%    & \geq  2\cdot 100^m \left(\frac{\alpha}{p} \right)^{4-\frac{1}{d}\left(| N_{< k}(v_i)| + | N_{< k}(v_j)|  \right) } \\
     > 2^m \left(\frac{\alpha}{p} \right)^2.
\end{align*}
%In the last inequality we used the assumption $\alpha \le p^{d+1}n/(m4^m)$.
Applying Lemma~\ref{LEMMA:embed-tree} with $T = F_2$ and the sets $X_{k}, \ldots, X_{m}$, we know that such an embedding $f$ exists.
%we obtain an embedding $f\colon \{v_{k}, \ldots, v_{m}\} \to V(G)$ such that $f(v_i) \in X_i$ for all $i\in [k,m]$.
This completes the proof of Theorem~\ref{THM:CFZ+tree}.
\end{proof}

We remark that there are some graphs to which the precise statement of the above theorem cannot be applied in order to get a tight result - for example, odd cycles. However, the proof can be slightly adapted to deal with them. For odd cycles we take $k = 2$, so that $F[v_k, \ldots, v_m]$ is a path; this $F_2$ can then be embedded in a different way than in the general theorem above, in particular, using also the expansion properties of $(\alpha,p)$-jumbled graphs.

We now give a further generalization of Theorem~\ref{THM:CFZ+tree} where instead of partitioning the graph into two parts which are dealt with separately, we partition the graph into several parts.

For every graph $F$ on $m$ vertices, let $\hat{d}_{2}(F)$ denote the smallest number $d$ for which there exists an ordering $v_1,v_2, \ldots, v_m$ of $V(F)$ such that the following statements hold for some $\ell\in \mathbb{N}$ and $1 = k_1 < k_2 < \cdots < k_{\ell} < k_{\ell+1} = m$:
\begin{enumerate}[label=(\alph*)]
    \item $F[\{v_{k_s}, \ldots, v_{k_{s+1}-1}\}]$ is a forest for all $s\in [\ell]$,
    \item for all edges $v_iv_j \in F\setminus \left(\bigcup_{s=1}^{\ell}F[\{v_{k_s}, \ldots, v_{k_{s+1}-1}\}]\right)$ with $i< j$, we have $N_{< i}(v_i) + N_{< i}(v_j) \leq 2d$,  and
    \item for all $s\in [\ell]$ and for all edges $v_iv_j \in F[\{v_{k_s}, \ldots, v_{k_{s+1}-1}\}]$, we have $N_{<k_s}(v_i) + N_{<k_s}(v_j) \leq 2d$.
\end{enumerate}
It is clear that $\hat{d}_{2}(F) \le d_{2}(F)$ since in the definition of $d_{2}(F)$ we always let $\ell=m-1$ and $k_i = i$ for all $i\in [m]$.

\begin{theorem}\label{THM:CFZ+many-trees}
For every graph $F$ we have $\mathrm{exp}(F) \leq 1 - \frac{1}{2\hat{d}_{2}(F)+1}$.
\end{theorem}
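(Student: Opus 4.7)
The strategy generalizes the two-phase argument of Theorem~\ref{THM:CFZ+tree} to $\ell$ phases, processing the intervals $[k_s, k_{s+1}-1]$ in order for $s = 1, 2, \ldots, \ell$, while maintaining an invariant on the candidate sets for the not-yet-embedded vertices. Specifically, just before the $s$-th step, the vertices $v_1, \ldots, v_{k_s - 1}$ have been embedded into $G$ via some map $f$, and for each $j \ge k_s$ there is a candidate set $X_j \subseteq V_j$ (for a fixed balanced partition $V(G) = V_1 \cup \cdots \cup V_m$ with $|V_j| \ge n/(2m)$) satisfying
\[
|X_j| \ge \frac{p^{|N_{<k_s}(v_j)|}\, n}{C_s}, \qquad X_j \subseteq \bigcap_{i < k_s,\ v_iv_j \in F} N_G(f(v_i)),
\]
for a constant $C_s$ depending only on $m$. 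The base case $s = 1$ holds trivially with $X_j := V_j$ and $C_1 = 2m$, and after $\ell$ iterations all of $V(F)$ has been embedded, producing a copy of $F$ in $G$.

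For the inductive step, I would redo the leaf-peeling argument underlying Lemma~\ref{LEMMA:embed-tree} for the forest $F[\{v_{k_s}, \ldots, v_{k_{s+1}-1}\}]$, interleaved with updates to the candidate sets $X_j$ for $j \ge k_{s+1}$. Before each vertex $v_i$ of the current interval is embedded, I would restrict $X_i$ to those $u$ with $|N_G(u) \cap X_j| \ge p|X_j|/2$ for every still-to-embed neighbor $v_j$ of $v_i$, where $X_j$ is the current (possibly already shrunk) candidate set. By a Lemma~\ref{lem:standard}-type argument, each such restriction retains at least half of $X_i$, provided $|X_i||X_j| \gg (\alpha/p)^2$; this follows from the size invariant together with $\alpha \le p^{d+1}n/C$ (for a large constant $C$ depending on $m$) and the appropriate degree condition: for within-interval forest edges, condition (c) directly gives $|N_{<k_s}(v_i)| + |N_{<k_s}(v_j)| \le 2d$; for between-interval edges, condition (b) gives the stronger $|N_{<i}(v_i)| + |N_{<i}(v_j)| \le 2d$, which implies the same bound since $k_s \le i$. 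After choosing $f(v_i) \in X_i$, each still-to-embed neighbor's candidate set $X_j$ is updated to $X_j \cap N_G(f(v_i))$, preserving at least a $p/2$ fraction by the preceding preprocessing, so the invariant is maintained for interval $s+1$ with $C_{s+1} \le 2^{O(m)} C_s$.

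The main technical subtlety is precisely this interleaving of preprocessing with peeling. A naive approach that preprocesses all $X_i$ upfront and then applies Lemma~\ref{LEMMA:embed-tree} as a black box gives control over $|N_G(f(v_i)) \cap X_j^{\text{initial}}|$ for each individual $i$, but not over the cascaded intersection $X_j \cap \bigcap_i N_G(f(v_i))$ that arises when $v_j$ has several neighbors in the current interval. The interleaved version sidesteps this obstacle by always restricting $X_i$ with respect to the current $X_j$, so each $p/2$-factor loss in $|X_j|$ is matched by a single preprocessing step on some $X_i$. The final constant $C_\ell \le 2^{O(m^2)}$ is absorbed into $C$, and the same conversion as at the start of the proof of Theorem~\ref{THM:CFZ+tree} turns this embedding argument into the exponent bound $\mathrm{exp}(F) \le 1 - 1/(2\hat{d}_2(F)+1)$.
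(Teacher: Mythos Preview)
The paper does not spell out a proof of Theorem~\ref{THM:CFZ+many-trees}; it is stated as a direct generalization of Theorem~\ref{THM:CFZ+tree} with the details left to the reader. Your outline --- iterate over the $\ell$ forest-intervals while maintaining a candidate-set invariant, and handle the cascading of cross-interval neighbourhoods by interleaving rather than preprocessing --- is exactly the intended approach, and you correctly isolate the obstacle that rules out a naive ``preprocess, then apply Lemma~\ref{LEMMA:embed-tree} as a black box'' argument.

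There is, however, a genuine gap in your description of the inductive step. You say that before embedding each $v_i$ in interval $s$ you restrict $X_i$ with respect to \emph{every} still-to-embed neighbour $v_j$, and that the required estimate $|X_i||X_j|\gg(\alpha/p)^2$ follows for within-interval forest edges from condition~(c). But condition~(c) only bounds $|N_{<k_s}(v_i)|+|N_{<k_s}(v_j)|$, which controls the sizes of $X_i,X_j$ at the \emph{beginning} of interval~$s$. If your restrict--embed--update cycle is applied uniformly to forest neighbours as well, then each already-embedded forest neighbour of $v_j$ costs a factor $p/2$ in $|X_j|$, and by the time you reach $v_i$ the relevant exponent has grown to $|N_{<i}(v_i)|+|N_{<i}(v_j)|$ --- precisely the $d_2$ condition that $\hat d_2$ is meant to undercut. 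Concretely, in the Petersen ordering $(1,3,6,9,2,4,5,7,8,10)$ with $k_s=5$ and $d=1$, the forest edge $v_7v_8$ has $N_{<7}(v_7)+N_{<7}(v_8)=1+2=3>2d$, while condition~(c) gives only $N_{<5}(v_7)+N_{<5}(v_8)=0+2=2$. The entire point of Lemma~\ref{LEMMA:embed-tree} is that forest edges cost only constant (halving) factors, not $p$-factors; your uniform ``$\ge p|X_j|/2$ neighbours'' criterion for all neighbours throws this saving away.

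The correct argument must keep the two mechanisms distinct: within-interval forest edges are handled by leaf-peeling (each restriction halves a set, verified against condition~(c) using the sizes at the start of the interval), while cross-interval edges are handled by the $p/2$-update (verified against condition~(b)). The interleaving has to be arranged so that at the moment each forest vertex is actually embedded there is still a candidate set of size $\Omega(p|X_i|)$ available, large enough to run Lemma~\ref{lem:standard} against the \emph{current} cross-interval $X_j$'s, and one then checks that the resulting exponent of $p$ is covered by $N_{<i}(v_i)+N_{<i}(v_j)\le 2d$ from condition~(b). That bookkeeping --- how the peeling order interacts with the index-based condition~(b) --- is where the real work lies and is not addressed by your sketch.
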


 {\bf Remark.} One can extend Theorem~\ref{THM:CFZ+many-trees}  to get a counting result for $F$ in pseudorandom graphs that improves  Theorem~1.14 in~\cite{CFZ14} (by replacing $d_2(F)$ there with $\hat{d}_2(F)$ here).  This could result in some improvements for the corresponding Tur\'{a}n and Ramsey problems in pseudorandom graphs (see Theorems~1.4, 1.5, and 1.6 in~\cite{CFZ14}).

%%%%%%%%%%%%%%%%%%%%%%%%%%%%%%%%%
\section{$\{K_{2,3}, K_3\}$-free pseudorandom graphs}\label{SEC:K2-K23}
%Let $\mathbb{F}_{q}$ be a finite field such that $\mathrm{char}(\mathbb{F}_{q})\neq 3$.
In this section we present a construction of $\{K_{2,3}, K_3\}$-free pseudorandom graphs thereby proving Theorem~\ref{THM:K3-K2,3}.

Suppose that $F$ is a finite group and $S \subset H$ is a symmetric subset, i.e. $S = S^{-1}$.
Then the \textit{Cayley graph} $\mathrm{Cay}(H,S)$ is a graph on $F$ with edge set
\begin{align*}
    \left\{\{v, vs\} \colon v\in H \text{ and } s\in S\right\}.
\end{align*}

The spectrum, i.e. the eigenvalues of the adjacency matrix, of a Cayley graph can be represented by the characters of of $F$ (see e.g.~\cite{Lovasz75,Babai79}).
For our purpose, we only need the following result for the case that $F$ is an abelian group.

Recall that an abelian group $H$ can be represented as $H = \bigoplus_{i=1}^{k}\mathbb{Z}_{n_i}$ for some integers $k$ and $n_1, \ldots, n_{k}$.
For abelian groups we have a simple description of all the characters.
For each $\textbf{a} = (a_1, \ldots, a_{k}) \in \bigoplus_{i=1}^{k}\mathbb{Z}_{n_i}$ we have a character $\psi_{\textbf{a}} \colon H\to \mathbb{C}$ defined by
\begin{align*}
    \psi_{\textbf{a}}(h_1, \ldots, h_k) = \prod_{i=1}^{k} \omega_{n_i}^{a_i h_i},
\end{align*}
where $\omega_t = e^{2\pi i/t}$.

\begin{lemma}[see e.g.~\cite{Lovasz75,Babai79}]\label{LEMMA:spectrum-characters}
Suppose that $H = \bigoplus_{i=1}^{k}\mathbb{Z}_{n_i}$ is an abelian group.
Then the spectrum of the Cayley graph $\mathrm{Cay}(H,S)$ is
\begin{align*}
    \left\{\sum_{\textbf{s}\in S}\psi_{\textbf{a}}(\textbf{s}) \colon \textbf{a}\in H \right\}.
\end{align*}
\end{lemma}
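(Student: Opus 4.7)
The plan is to exhibit, for each $\textbf{a} \in H$, an explicit eigenvector of the adjacency matrix of $\mathrm{Cay}(H,S)$ with eigenvalue $\sum_{\textbf{s} \in S} \psi_{\textbf{a}}(\textbf{s})$, and then to invoke the orthogonality of characters of a finite abelian group to conclude that the resulting family of eigenvectors spans $\mathbb{C}^{H}$ and hence realises the entire spectrum.

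First I would index the adjacency matrix $A$ of $\mathrm{Cay}(H,S)$ by elements of $H$, so that $A_{h,h'} = 1$ iff $h - h' \in S$ (using additive notation, which is natural since $H$ is abelian). For each $\textbf{a} \in H$ define $v_{\textbf{a}} \in \mathbb{C}^{H}$ by $(v_{\textbf{a}})_h := \psi_{\textbf{a}}(h)$. Using the multiplicativity $\psi_{\textbf{a}}(h-\textbf{s}) = \psi_{\textbf{a}}(h)\,\psi_{\textbf{a}}(-\textbf{s})$ of a character and the hypothesis $S = -S$, a short computation gives
\begin{align*}
(Av_{\textbf{a}})_h \;=\; \sum_{\textbf{s}\in S} \psi_{\textbf{a}}(h-\textbf{s}) \;=\; \psi_{\textbf{a}}(h)\sum_{\textbf{s}\in S}\psi_{\textbf{a}}(-\textbf{s}) \;=\; \psi_{\textbf{a}}(h)\sum_{\textbf{s}\in S}\psi_{\textbf{a}}(\textbf{s}),
\end{align*}
so $v_{\textbf{a}}$ is an eigenvector of $A$ with eigenvalue $\lambda_{\textbf{a}} := \sum_{\textbf{s}\in S}\psi_{\textbf{a}}(\textbf{s})$.

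Next I would quote the standard orthogonality relations for characters of a finite abelian group: the vectors $\{v_{\textbf{a}}\colon \textbf{a}\in H\}$ are pairwise orthogonal in $\mathbb{C}^{H}$ and hence form a basis of size $|H|$. Consequently, $\{\lambda_{\textbf{a}}\colon \textbf{a}\in H\}$ (with multiplicity) exhausts the spectrum of $A$, which is exactly the conclusion of the lemma.

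The computation is routine and there is no serious obstacle; the only subtle point is the use of $S=-S$, which both makes $\lambda_{\textbf{a}}$ depend only on the unordered sum over $S$ and ensures that each $\lambda_{\textbf{a}}$ is real, as it must be since $A$ is a real symmetric matrix. The orthogonality/completeness of the characters of a finite abelian group is the one external input and I would take it as a black box from elementary representation theory (equivalently, from the Fourier inversion formula on $H$).
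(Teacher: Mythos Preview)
Your proposal is correct and is exactly the standard argument for this classical fact. Note that the paper does not actually prove this lemma: it is stated with a reference (``see e.g.~\cite{Lovasz75,Babai79}'') and used as a black box, so there is no paper proof to compare against; your write-up supplies precisely the argument those references contain.
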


Our main result is as follows.

\begin{theorem}\label{THM:K2-K23-Cayley}
Suppose that $p\neq 3$ is a prime number, $H =  \mathbb{Z}_{p}^2$, and $$S = \left\{(x,x^3)\colon x\in \mathbb{Z}_{p}\setminus \{0\}\right\}.$$
Then $\mathrm{Cay}(H,S)$ is a $\{K_{3}, K_{2,3}\}$-free $(n,d,\lambda)$-graph with $n = p^2$, $d = p-1$, and $\lambda \le 2 \sqrt{p}+1$.
\end{theorem}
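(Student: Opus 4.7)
The plan is to verify, in order, that $\mathrm{Cay}(H,S)$ is (i) well-defined and $(p-1)$-regular on $p^2$ vertices, (ii) triangle-free, (iii) $K_{2,3}$-free, and (iv) has all non-trivial eigenvalues bounded by $2\sqrt{p}+1$. Step (i) is a sanity check: the symmetry $S=-S$ follows from $(-x)^3=-x^3$ in $\mathbb{Z}_p$, and the map $x\mapsto(x,x^3)$ is injective on $\mathbb{Z}_p\setminus\{0\}$, so $|S|=p-1$, giving $n=p^2$ and $d=p-1$.

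For (ii), a triangle produces $a,b\in\mathbb{Z}_p\setminus\{0\}$ with $(a,a^3)$, $(b,b^3)$, and $(a+b,(a+b)^3)$ all in $S$. Matching second coordinates forces $(a+b)^3=a^3+b^3$, i.e., $3ab(a+b)=0$. Since $p\ne 3$ and $a,b\ne 0$, this compels $a+b=0$, but then the third vector is $(0,0)\notin S$, a contradiction. For (iii), I would fix a putative $K_{2,3}$ with parts $\{u,v\}$ and $\{w_1,w_2,w_3\}$ and write $u-w_i=(x_i,x_i^3)$; then $v-w_i=(x_i,x_i^3)+d$ must also lie in $S$, where $d=(d_1,d_2):=v-u$ is independent of $i$. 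Writing $v-w_i=(y_i,y_i^3)$, the two coordinates give $y_i=x_i+d_1$ and $(x_i+d_1)^3-x_i^3=d_2$. If $d_1=0$ this forces $d_2=0$, hence $u=v$, a contradiction; otherwise the second equation becomes a quadratic in $x_i$ (with leading coefficient $3d_1\ne 0$, using $p\ne 3$), which has at most two solutions. Since distinct $w_i$ correspond to distinct $x_i$, three common neighbours are impossible.

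For (iv) I would apply Lemma~\ref{LEMMA:spectrum-characters} with $H=\mathbb{Z}_p\oplus\mathbb{Z}_p$. For $\textbf{a}=(a,b)\in\mathbb{Z}_p^2$, the corresponding eigenvalue is
$$\lambda_{(a,b)}=\sum_{x\in\mathbb{Z}_p\setminus\{0\}}\omega_p^{ax+bx^3}.$$
The value $(0,0)$ gives $d=p-1$; each $(a,0)$ with $a\ne 0$ gives $\sum_{x\ne 0}\omega_p^{ax}=-1$; and for $b\ne 0$ the exponent $f(x)=bx^3+ax$ is a genuine cubic. Since $\gcd(3,p)=1$, Weil's bound for one-variable exponential sums over $\mathbb{F}_p$ gives $\bigl|\sum_{x\in\mathbb{Z}_p}\omega_p^{f(x)}\bigr|\le(\deg f-1)\sqrt{p}=2\sqrt{p}$, and removing the $x=0$ term costs at most $1$, so $|\lambda_{(a,b)}|\le 2\sqrt{p}+1$. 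Taking the maximum over non-trivial characters yields the stated bound.

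The only step that is not essentially elementary is the character-sum estimate in the cubic case; this is where one invokes Weil's theorem as a black box. Everything else rests on the identity $(a+b)^3-a^3-b^3=3ab(a+b)$ and the fact that a non-degenerate univariate quadratic has at most two roots. I do not anticipate any genuine obstacle beyond citing the right exponential-sum bound.
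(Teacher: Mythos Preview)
Your proposal is correct and follows essentially the same route as the paper: the same Cayley-graph regularity check, the same $K_{2,3}$ argument via a quadratic in the common-neighbour parameter, and the same appeal to Lemma~\ref{LEMMA:spectrum-characters} followed by the Weil bound (you phrase it as $(\deg f-1)\sqrt{p}$, the paper as $(m-1)\sqrt{p}$ via Theorem~\ref{THM:pseudo:Bollabas-cha-sum}, but these coincide here). The only cosmetic difference is in the triangle argument, where you use $(a+b)^3-a^3-b^3=3ab(a+b)$ directly while the paper parametrizes all three edge-differences by $a,b,c$ with $a+b+c=0$ and derives $3abc=0$; both reach the same contradiction.
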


We will use the following well known estimate of Weil in the proof of Theorem~\ref{THM:K2-K23-Cayley}.

Recall that the \textit{order} of a character $\chi$ is the smallest positive integer $d$
such that $\chi^d = \chi_0$, where $\chi_0$ is the trivial character.

\begin{theorem}[see e.g. {\cite[Theorem~13.3]{BOLLOBASBOOK}}]\label{THM:pseudo:Bollabas-cha-sum}
Let $\chi$ be a character of order $d > 1$.
Suppose that $f(X)\in \mathbb{F}[X]$ has precisely $m$ distinct zeros and it is not a $d$th power,
that is $f(X)$ is not the form $c\left(g(X)\right)^{d}$, where $c\in \mathbb{F}$ and $g(X)\in \mathbb{F}[X]$.
Then
\begin{align}
\left| \sum_{x\in \mathbb{F}}\chi\left(f(x)\right) \right| \le (m-1)\sqrt{p}.  \notag
\end{align}
\end{theorem}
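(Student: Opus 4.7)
The plan is to deduce this character-sum bound from the Hasse--Weil inequality for the number of $\mathbb{F}_p$-points on an algebraic curve, which is the standard route to Weil-type estimates. I would take Hasse--Weil as a black box (its proof is a deep theorem of Weil, alternatively reachable by the Stepanov--Bombieri elementary method) and concentrate the argument on translating the sum into a point count and controlling the genus.

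First, I would recast the sum $S = \sum_{x \in \mathbb{F}} \chi(f(x))$ as part of an affine point count on the curve $C : y^d = f(x)$ over $\mathbb{F}$. Extending $\chi$ to $\mathbb{F}$ by $\chi(0) = 0$, the orthogonality identity
\[
\#\{y \in \mathbb{F} : y^d = a\} \;=\; \sum_{j=0}^{d-1} \chi^{j}(a) \qquad (a \neq 0),
\]
together with the convention that $a=0$ contributes $1$ to the left side, gives
\[
\#C(\mathbb{F})_{\mathrm{aff}} \;=\; p \;+\; \sum_{j=1}^{d-1} \sum_{x \in \mathbb{F}} \chi^{j}(f(x)) \;+\; E,
\]
where $|E| \le m$ accounts for the $m$ zeros of $f$. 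So bounding $S$ reduces to bounding $\#C(\mathbb{F})$ and separating the contributions $S_j := \sum_x \chi^{j}(f(x))$ for $1 \le j \le d-1$.

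Second, I would verify the geometric hypotheses needed to apply Hasse--Weil to $C$. The assumption that $f$ is not a $d$-th power in $\mathbb{F}[X]$ is precisely what makes the polynomial $y^d - f(x)$ absolutely irreducible (a standard factorisation argument in $\mathbb{F}[X]$, using the multiplicities of the roots of $f$ modulo $d$), so $C$ is geometrically irreducible. Passing to the smooth projective model $\widetilde C$ and viewing $x : \widetilde C \to \mathbb{P}^1$ as a degree-$d$ cover, a Riemann--Hurwitz computation with ramification supported above the $m$ roots of $f$ and above $\infty$ yields the genus bound $2g \le (m-1)(d-1)$, and adjustments for the bounded number of points at infinity of $\widetilde C$ are absorbed into the constant.

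Third, I would apply Hasse--Weil, $|\#\widetilde C(\mathbb{F}) - (p+1)| \le 2g\sqrt{p}$, and propagate the estimate back through the identity from the first step. A cleaner route that yields the precise constant $(m-1)\sqrt{p}$ is to apply Hasse--Weil separately to each curve $C_j : y^{d/\gcd(j,d)} = f(x)$ (one for each nontrivial power $\chi^j$), extracting $|S_j| \le (m-1)\sqrt{p}$ individually rather than only on average; setting $j=1$ gives the stated bound. The main obstacle will be the Hasse--Weil inequality itself, which I would not attempt to reprove; the genuine work in the proposal lies in the combinatorial translation and in justifying the irreducibility of $y^d - f(x)$ from the non-$d$-th-power hypothesis, while everything else is bookkeeping.
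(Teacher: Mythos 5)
The paper never proves this statement---it is quoted as a classical theorem of Weil with a pointer to Bollob\'as's book---so the only question is whether your sketch would actually deliver it, and as written it would not. The central gap is the extraction step. The affine point count of $C:y^d=f(x)$ equals $p+\sum_{j=1}^{d-1}S_j$ up to an error of at most $m$, so Hasse--Weil applied to $C$ bounds only the aggregate $\bigl|\sum_{j=1}^{d-1}S_j\bigr|\le 2g\sqrt{p}+O(d)\le (m-1)(d-1)\sqrt{p}+O(d)$; it says nothing about any single $S_j$. Your proposed remedy of applying Hasse--Weil separately to $C_j:y^{d/\gcd(j,d)}=f(x)$ does not isolate $S_j$ either: the point count of $C_j$ is again $p$ plus the sum of $S_\psi$ over \emph{all} nontrivial characters $\psi$ with $\psi^{d/\gcd(j,d)}$ trivial, and for $j=1$ this is literally the same aggregate you started with. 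Even M\"obius inversion over divisors $e\mid d$ only isolates the sum over the $\phi(e)$ characters of exact order $e$, which are Galois conjugates of one another, so smallness of their sum does not bound any single one. To get the stated constant $m-1$ for one character you need more input: the factorization of the zeta function of $C$ into the L-functions attached to the characters $\chi^j$ (each a polynomial of degree at most $m-1$) together with the Riemann hypothesis for each factor, or alternatively the Stepanov--Schmidt elementary method, which treats a single character sum directly. Note that the weaker aggregate bound would be useless in this paper's application, where the character has order $d=p$ and $(m-1)(d-1)\sqrt{p}\gg p$.

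A second, smaller but genuine error is the irreducibility claim: ``$f$ is not a $d$th power'' does \emph{not} imply that $y^d-f(x)$ is absolutely irreducible. For instance, with $d=4$ and $f=g^2$ for a squarefree non-square $g$, the hypothesis holds but $y^4-f=(y^2-g)(y^2+g)$. Absolute irreducibility requires $f$ not to be (up to a constant) an $\ell$th power in $\overline{\mathbb{F}}[X]$ for every prime $\ell\mid d$ (with the usual extra condition when $4\mid d$), equivalently $\gcd(d,e_1,\dots,e_m)=1$ for the multiplicities $e_i$ of the distinct roots. The standard repair is a preliminary reduction: if $e:=\gcd(d,e_1,\dots,e_m)>1$, write $f=ch^e$ and replace $(\chi,f,d)$ by $(\chi^e,h,d/e)$, using that $\chi^e$ is still nontrivial precisely because $f$ is not a $d$th power, and noting that $m$---hence the target bound---is unchanged; only after this reduction may you pass to the curve. (Separately, be aware that the paper actually invokes this theorem for an \emph{additive} character of order $p$, where the correct Weil bound is phrased via $\deg f$ rather than the number of distinct zeros; that is an issue with the invocation, not with your argument, but it shows the multiplicative-character statement alone is not quite what the application needs.)
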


\begin{proof}[Proof of Theorem~\ref{THM:K2-K23-Cayley}]
Let $G = \mathrm{Cay}(H,S)$. Let $n=p^2$. It is clear that the number of vertices in $G$ is $n$, and it follows from the definition of Cayley graphs that $G$ is $|S|$-regular.
Let $\lambda_1 \ge \cdots \ge \lambda_n$ be the eigenvalues of the adjacency matrix $A_{G}$ of $G$. Since $G$ is regular, we have $\lambda_1 = |S| = p-1$.

First we prove that $G$ is $K_3$-free.
Suppose to the contrary that there exist three vertices $\textbf{u}, \textbf{v}, \textbf{w} \in \mathbb{Z}_{p}^2$ that form a copy of $K_3$ in $G$.
Assume that $\textbf{v}-\textbf{u} = (a,a^3)$, $\textbf{w}-\textbf{v}=(b,b^3)$, and $\textbf{u}-\textbf{w}=(c,c^3)$.
Then
\begin{align}
a+b+c & = 0, \notag\\
a^3+b^3+c^3 &= 0. \notag
\end{align}
Therefore,
\begin{align}
0
= (a+b+c)(a^2+b^2+c^2) - (a^3+b^3+c^3)
& = ab(a+b)+ac(a+c)+bc(b+c) \notag\\
& = -3abc. \notag
\end{align}
Since $p\neq 3$, we must have $0\in \{a,b,c\}$, a contradiction.

Next we prove that $G$ is $K_{2,3}$-free.
It is equivalent to show that every pair of vertices $\{ \textbf{u}, \textbf{v} \} \subset \mathbb{Z}_{p}^2$ has at most two common neighbors.
Let $a = \textbf{u}_1-\textbf{v}_1$ and $b = \textbf{u}_2-\textbf{v}_2$.
A common neighbor of $\textbf{u}$ and $\textbf{v}$ implies that there exist $x,y\in \mathbb{Z}_{p}\setminus \{0\}$ such that
\begin{align}
y-x &= a, \mathrm{\ and} \notag\\
y^3-x^3 &= b. \notag
\end{align}
These two equations imply that $(x+a)^3 = x^3+b$, which simplifies to $3ax^2+3a^2x+a^3-b=0$.
Since $(a,b) \ne (0,0)$ and $p \ne 3$, this quadratic equation in $x$ has at most two solutions in $\mathbb{Z}_{p}\setminus \{0\}$.
Therefore, $\textbf{u}$ and $\textbf{v}$ have at most two common neighbors.

Finally, we prove that $|\lambda_i| \le 2\sqrt{p}$ for all $i\in [2,n]$.
By Lemma~\ref{LEMMA:spectrum-characters}, for every $i\in [n]$ there exists $(a_1, a_2) \in \mathbb{Z}_{p}^2$ such that
\begin{align}
\lambda_i
= \lambda_{(a_1,a_2)}
= \sum_{\textbf{s}\in S}\varphi_{(a_1,a_2)}(\textbf{s})
= \sum_{x=1}^{p-1}\omega_{p}^{a_1x+a_2x^3}  \notag
\end{align}
If $(a_1,a_2) = (0,0)$, then $\lambda_{(a_1,a_2)}  = |S| = p-1$, and this corresponds to $\lambda_1$.
So we may assume that $(a_1,a_2) \neq (0,0)$.

First, it is easy to see that the character $\chi \colon \mathbb{Z}_p \to \mathbb{C}^{\times}$ defined by
$\chi(\alpha) = \omega_{p}^{\alpha}$ for all $\alpha \in \mathbb{Z}_p$ has order $p$.
On the other hand, since $(a_1,a_2) \neq (0,0)$ and $p\neq 3$, the polynomial $f(X) = a_1 X + a_2 X^3$ is not of the form $c\left(g(X)\right)^p$ for any $c\in \mathbb{Z}_p$ and for any polynomial $g(X)$.
Therefore, it follows from Theorem~\ref{THM:pseudo:Bollabas-cha-sum} that
\begin{align*}
    \left|\sum_{x=1}^{p-1}\omega_{p}^{a_1x + a_2x^3} \right|
    = \left|\sum_{x=0}^{p-1}\omega_{p}^{a_1x + a_2x^3} - 1\right|
    \le \left|\sum_{x=0}^{p-1}\omega_{p}^{a_1x + a_2x^3}\right|+1
    \le 2\sqrt{p}+1.
\end{align*}
This implies that $|\lambda_i| \le 2\sqrt{p}+1$ for all $i\in [n]\setminus\{1\}$, and hence completes the proof of Theorem~\ref{THM:K2-K23-Cayley}.
\end{proof}
%%%%%%%%%%%%%%%%%%%%%%%%%%%%%%%%%

%%%%%%%%%%%%%%%%%%%%%%%%%%%%%%%%%%%%%%%%%%%%%%%%%%%%%
\section{$K_t$-free pseudorandom graphs}\label{SEC:Kt-free}
In this section we present a construction that is isomorphic to the construction of Bishnoi, Ihringer, and Pepe~\cite{BIP20}, and give a new proof
to Theorem~\ref{THM:BIP20}.

Denote by $\mathrm{PG}(t-1,q)$ the $(t-1)$-dimensional projective space over $\mathbb{F}_{q}$, i.e. $\mathrm{PG}(t-1,q) = \mathbb{F}_{q}^{t}/{\sim}$, where two vectors $\textbf{x}, \textbf{y} \in \mathbb{F}_{q}^{t}$ are equivalent under $\sim$ if there exists a non-zero element $a\in \mathbb{F}_{q}$ such that $\textbf{x} = a \textbf{y}$. For a vector $\textbf{x} \in \mathbb{F}_{q}^{t}$ we use $\llbracket\textbf{x}\rrbracket$ to denote its equivalence class in $\mathbb{F}_{q}^{t}/{\sim}$.
It is easy to see that the number of points in $\mathrm{PG}(t-1,q)$ is $\frac{q^t-1}{q-1} = (1+o(1))q^{t-1}$.
Recall that the dot-product $ \textbf{x}\cdot \textbf{y}$ of two vectors $\textbf{x}, \textbf{y} \in \mathbb{F}_{q}^{t}$ is defined as $\textbf{x}\cdot \textbf{y} = \sum_{i=1}^{t}x_iy_i$.
A point $\textbf{x}=(x_1,\ldots,x_t) \in \mathbb{F}_{q}^{t}$ is called
\begin{itemize}
\item
\textit{absolute} if $\textbf{x}\cdot \textbf{x} = 0$,
\item
\textit{square} if $\textbf{x}\cdot \textbf{x} = a^2$ for some $a\in \mathbb{F}_{q}$,
\item
\textit{non-square} if $\textbf{x}\cdot \textbf{x} \neq a^2$ for all $a\in \mathbb{F}_{q}$,
\end{itemize}
We use $X_{0}(t,q), X_{\Box}(t,q), X_{\boxtimes}(t,q)$ to denote the collection of all absolute points, square points, and non-square points in $\mathbb{F}_{q}^{t}$, respectively.
If $t$ and $q$ are clear from the context, we will omit them and use $X_{0}, X_{\Box}, X_{\boxtimes}$ for simplicity.
It is easy to see from the definition that if $\textbf{x} \in X_{0}$, $\textbf{x} \in X_{\Box}$, or $\textbf{x} \in X_{\boxtimes}$, then $\llbracket\textbf{x}\rrbracket \subset X_{0}$, $\llbracket\textbf{x}\rrbracket \subset X_{\Box}$, or $\llbracket\textbf{x}\rrbracket \subset X_{\boxtimes}$, respectively.

Recall that a \textit{character} of a group $H$ is a homomorphism $\psi \colon H \to \mathbb{C}^{\times}$, and
the \textit{quadratic character} $\chi(\cdot)$ of $\mathbb{F}_{q}$ is defined as
\begin{align}
\chi(x) =
\begin{cases}
0, & \text{if $x = 0$}, \\
1, & \text{if $x$ is a square}, \\
-1, & \text{if $x$ is a non-square}.
\end{cases} \notag
\end{align}

Let $\mathrm{AK}(t-1,q)$ be the graph whose vertices are non-absolute points of $\mathrm{PG}(t-1,q)$
and two vertices $\llbracket\textbf{x}\rrbracket$ and $\llbracket\textbf{y}\rrbracket$ are adjacent iff $\textbf{x}\cdot \textbf{y} = 0$.
Note that $\mathrm{AK}(2,q)$ is just the Erd\H{o}s--Renyi graph.
In~\cite{AK97}, Alon and Krivelevich proved that $\mathrm{AK}(t-1,q)$ is a $K_{t+1}$-free $(n,d,\lambda)$-graph with $n = (1+o(1))q^{t-1}$, $d = \Theta(n^{1-\frac{1}{t}})$, and $\lambda = \Theta(\sqrt{d})$.

Parsons~\cite{Parson76} proved that for $q$ odd, the induced subgraph of $\mathrm{AK}(2,q)$ on $X_{\boxtimes}/{\sim}$ is $K_3$-free.
%This is better than the Alon--Krivelevich bound for $r=3$.
Indeed, suppose to the contrary that there exist three distinct points $\llbracket\textbf{x}_1\rrbracket, \llbracket\textbf{x}_2\rrbracket, \llbracket\textbf{x}_3\rrbracket \in X_{\boxtimes}/{\sim}$ that induce a copy of $K_3$ in $\mathrm{AK}(2,q)$.
Then $\textbf{x}_1, \textbf{x}_2, \textbf{x}_3$ are pairwise orthogonal, which means that there exists a non-zero element $a\in \mathbb{F}_{q}$ such that $\textbf{x}_3 = a \textbf{x}_1 \times \textbf{x}_2$, where $\textbf{x}_1 \times \textbf{x}_2$ is the \textit{cross-product} of $\textbf{x}_1$ and $\textbf{x}_2$.
Therefore, we have
\begin{align*}
    \textbf{x}_{3}\cdot \textbf{x}_{3}
   = \left( a \textbf{x}_1 \times \textbf{x}_2\right) \cdot \left( a \textbf{x}_1 \times \textbf{x}_2\right)
   = a^2 \cdot \left(\textbf{x}_{1}\cdot \textbf{x}_{1}\right) \cdot \left(\textbf{x}_{2}\cdot \textbf{x}_{2}\right),
\end{align*}
where in the last equality we used the fact that $\textbf{x}_1 \cdot \textbf{x}_{2} = 0$.
Applying the quadratic character $\chi(\cdot)$ to both sides of the equation above we obtain
\begin{align*}
   -1 = \chi(\textbf{x}_{3}\cdot \textbf{x}_{3})
   = \chi(a^2) \cdot \chi(\textbf{x}_{1}\cdot \textbf{x}_{1}) \cdot \chi(\textbf{x}_{2}\cdot \textbf{x}_{2}) = 1 \cdot (-1) \cdot (-1)  =1,
\end{align*}
a contradiction. Therefore, the induced subgraph of $\mathrm{AK}(2,q)$ on $X_{\boxtimes}/{\sim}$ is $K_3$-free.

Our first aim in this section is to extend Parsons' proof to all odd integers $t\ge 4$.

First, the cross-product can be extended from $3$-dimensional space to $r$-dimensional space for every $r\ge 4$. We refer the reader to~\cite{Spivak65} and~\cite{Dittmer95} for the formal definition. Here we only recall some basic properties of the cross-product in $r$-dimensional space.

\begin{fact}[see e.g.~\cite{Dittmer95}]\label{FACT:cross-product}
Suppose that $\textbf{x}_1, \ldots, \textbf{x}_{t-1} \in \mathbb{F}_{q}^t$ are $t-1$ vectors. Then
\begin{enumerate}
    \item  $\textbf{x}_1 \times \cdots \times \textbf{x}_{t-1}$ is skew-symmetric and linear in each $\textbf{x}_i$,
    \item $\textbf{x}_1 \times \cdots \times \textbf{x}_{t-1}$ is a vector that is orthogonal to each of $\textbf{x}_1, \ldots, \textbf{x}_{t-1}$,
    \item $\textbf{x}_1 \times \cdots \times \textbf{x}_{t-1} = 0$ iff $\textbf{x}_1, \ldots, \textbf{x}_{t-1}$ are linearly dependent.
\end{enumerate}
\end{fact}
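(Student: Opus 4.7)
The plan is to derive all three properties from a single determinantal identity. Write $M(\textbf{x}_1, \ldots, \textbf{x}_{t-1}, \textbf{y})$ for the $t \times t$ matrix whose rows are the listed vectors, and characterize the generalized cross-product by the relation
\begin{align*}
\textbf{y} \cdot (\textbf{x}_1 \times \cdots \times \textbf{x}_{t-1}) = \det M(\textbf{x}_1, \ldots, \textbf{x}_{t-1}, \textbf{y}) \quad \text{for all } \textbf{y} \in \mathbb{F}_q^t.
\end{align*}
From the coordinate-free viewpoint, the cross-product is the unique vector representing the linear functional $\textbf{y} \mapsto \det M(\textbf{x}_1, \ldots, \textbf{x}_{t-1}, \textbf{y})$ via the standard inner product, and existence and uniqueness follow from non-degeneracy of that bilinear pairing on $\mathbb{F}_q^t$ (which holds for every $q$). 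From the coordinate viewpoint, the cross-product is defined so that its $i$-th coordinate equals $(-1)^{i+1}$ times the determinant of the $(t-1)\times(t-1)$ minor obtained by deleting the $i$-th column from the $(t-1)\times t$ matrix with rows $\textbf{x}_1, \ldots, \textbf{x}_{t-1}$; the displayed identity is then just cofactor expansion along the last row of $M$.

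Given this identity, item~(1) follows by comparing two instances: exchanging $\textbf{x}_i$ and $\textbf{x}_j$ on the right swaps two rows of $M$ and negates the determinant, and since this holds for every $\textbf{y}$, non-degeneracy of the dot product forces the cross-product itself to change sign. Linearity in each argument is obtained in the same way from the multilinearity of the determinant in its rows.

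For item~(2), substitute $\textbf{y} = \textbf{x}_i$ into the identity. The resulting matrix has $\textbf{x}_i$ appearing both as the $i$-th row and as the last row, so its determinant is zero, which is exactly the statement $\textbf{x}_i \cdot (\textbf{x}_1 \times \cdots \times \textbf{x}_{t-1}) = 0$. For item~(3), note that the cross-product vanishes if and only if the functional $\textbf{y} \mapsto \det M(\textbf{x}_1, \ldots, \textbf{x}_{t-1}, \textbf{y})$ is identically zero on $\mathbb{F}_q^t$. If $\textbf{x}_1, \ldots, \textbf{x}_{t-1}$ are linearly dependent, every such matrix has dependent rows and hence zero determinant. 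Conversely, if they are linearly independent, they span a $(t-1)$-dimensional subspace, and since $\mathbb{F}_q^t$ has $q^{t-1}(q-1)>0$ vectors outside this subspace, one can choose $\textbf{y}$ so that $M$ is invertible and its determinant nonzero, forcing the cross-product to be nonzero.

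There is no real obstacle here; the argument is essentially formal bookkeeping around the determinantal identity. The only point worth flagging is that nothing in the proof uses characteristic zero or ordered fields, so the properties transfer verbatim from $\mathbb{R}$ to $\mathbb{F}_q$, which is what is needed for the later application to $\mathrm{AK}(t-1,q)$.
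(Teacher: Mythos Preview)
Your proof is correct. The paper does not actually prove Fact~\ref{FACT:cross-product}; it simply states the three properties and refers the reader to~\cite{Dittmer95}. Your argument supplies a clean self-contained derivation from the determinantal characterization $\textbf{y}\cdot(\textbf{x}_1\times\cdots\times\textbf{x}_{t-1})=\det M(\textbf{x}_1,\ldots,\textbf{x}_{t-1},\textbf{y})$, and each of the three deductions (row-swap for skew-symmetry, multilinearity in the rows, repeated rows for orthogonality, and the dimension/counting argument for the vanishing criterion) is valid over any field. Your remark that only non-degeneracy of the dot product is needed---not anisotropy or characteristic zero---is exactly the point that justifies working over $\mathbb{F}_q$.
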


A proof of the following theorem can be found in~\cite{Dittmer95}.

\begin{theorem}[see e.g.~\cite{Dittmer95}]
Let $\textbf{x}_1,\ldots,\textbf{x}_{t-1}, \textbf{y}_1,\ldots, \textbf{y}_{t-1}\in \mathbb{F}_{q}^t$.
Then
\begin{align}\label{EQU:pseudo:cross-product}
(\textbf{x}_1\times \cdots \times \textbf{x}_{t-1}) \cdot (\textbf{y}_1\times \cdots \times \textbf{y}_{t-1})
& = \mathrm{det}
\begin{bmatrix}
    \textbf{x}_1\cdot \textbf{y}_1 & \dots & \textbf{x}_1\cdot \textbf{y}_{t-1} \\
    \vdots & \ddots & \vdots \\
    \textbf{x}_{t-1}\cdot \textbf{y}_1 &  \dots & \textbf{x}_{t-1}\cdot \textbf{y}_{t-1}
    \end{bmatrix}.
\end{align}
\end{theorem}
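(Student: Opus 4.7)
The plan is to reduce the identity to the Cauchy--Binet formula. Recall that the cross product of $t-1$ vectors in $\mathbb{F}_q^t$ is defined by the formal determinant
$$\textbf{x}_1 \times \cdots \times \textbf{x}_{t-1} = \det \begin{bmatrix} \textbf{e}_1 & \cdots & \textbf{e}_t \\ x_{1,1} & \cdots & x_{1,t} \\ \vdots & & \vdots \\ x_{t-1,1} & \cdots & x_{t-1,t} \end{bmatrix},$$
expanded along the first row, where $\textbf{e}_1, \ldots, \textbf{e}_t$ is the standard basis of $\mathbb{F}_q^t$. Let $X$ be the $(t-1) \times t$ matrix whose $i$-th row is $\textbf{x}_i$, and write $X_{\hat{i}}$ for the $(t-1) \times (t-1)$ submatrix of $X$ obtained by deleting its $i$-th column. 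Cofactor expansion then shows that the $i$-th coordinate of $\textbf{x}_1 \times \cdots \times \textbf{x}_{t-1}$ equals $(-1)^{i+1}\det X_{\hat{i}}$, and the analogous identity holds for $\textbf{y}_1 \times \cdots \times \textbf{y}_{t-1}$ and the matrix $Y$ with rows $\textbf{y}_i$.

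Next I would compute the dot product coordinatewise. The two factors of $(-1)^{i+1}$ cancel, yielding
$$(\textbf{x}_1 \times \cdots \times \textbf{x}_{t-1}) \cdot (\textbf{y}_1 \times \cdots \times \textbf{y}_{t-1}) = \sum_{i=1}^{t} \det(X_{\hat{i}})\, \det(Y_{\hat{i}}),$$
which is exactly the sum, over all $(t-1)$-subsets $S$ of $[t]$, of $\det(X_S)\det(Y_S)$, where $X_S$ and $Y_S$ denote the submatrices of $X$ and $Y$ consisting of the columns indexed by $S$.

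To conclude, I would invoke the Cauchy--Binet formula applied to the $(t-1)\times t$ matrix $X$ and the $t\times (t-1)$ matrix $Y^{T}$, which gives $\det(XY^{T}) = \sum_{S} \det(X_S)\det(Y_S)$. Since the $(i,j)$-entry of $XY^{T}$ is $\textbf{x}_i \cdot \textbf{y}_j$, this coincides with the right-hand side of~\eqref{EQU:pseudo:cross-product}, finishing the argument. I do not anticipate any genuine obstacle here: Cauchy--Binet is a polynomial identity in the matrix entries, hence holds over an arbitrary commutative ring and in particular over $\mathbb{F}_q$, and the only bookkeeping is the sign cancellation in the dot product, which is automatic.
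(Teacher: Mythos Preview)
Your argument is correct: expressing the coordinates of the cross product as signed minors and then invoking Cauchy--Binet for $\det(XY^{T})$ is the standard route, and the sign bookkeeping and the validity of Cauchy--Binet over $\mathbb{F}_q$ are handled as you say.

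There is nothing to compare against, however, because the paper does not supply its own proof of this identity. The theorem is stated with a reference to~\cite{Dittmer95} and then used as a black box in the proof of Theorem~\ref{THM:pseudo:r-odd-Kr-free}. So your write-up is not an alternative to the paper's proof but rather fills in a result the paper imports from the literature.
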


Our main result in this section is as follows.

\begin{theorem}\label{THM:pseudo:r-odd-Kr-free}
Suppose that $q$ is an odd prime power and $t\ge 3$ is an odd integer. Then
the induced subgraph of $\mathrm{AK}(t-1,q)$ on $X_{\boxtimes}/{\sim}$ is a $K_{t}$-free $(p,\alpha)$-jumbled graph with $p=\Theta\left(n^{-\frac{1}{t-1}}\right)$ and $\alpha = \Theta(\sqrt{np})$, where $n = |X_{\boxtimes}/{\sim}|$.
\end{theorem}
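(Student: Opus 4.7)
My plan is to establish the $K_t$-freeness and the pseudorandomness claims separately.

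For $K_t$-freeness, I would generalize Parsons' cross-product argument (quoted above for $t=3$) to every odd $t\ge 3$. Suppose towards a contradiction that vertices $\llbracket\textbf{x}_1\rrbracket,\ldots,\llbracket\textbf{x}_t\rrbracket \in X_{\boxtimes}/{\sim}$ induce a copy of $K_t$, so that $\textbf{x}_i\cdot\textbf{x}_j = 0$ for $i\neq j$ and each $\textbf{x}_i\cdot\textbf{x}_i$ is a non-square (in particular nonzero). Taking inner products with $\textbf{x}_j$ immediately shows that $\textbf{x}_1,\ldots,\textbf{x}_{t-1}$ are linearly independent, so Fact~\ref{FACT:cross-product} implies that $\textbf{w} := \textbf{x}_1\times\cdots\times\textbf{x}_{t-1}$ is a nonzero vector orthogonal to each $\textbf{x}_i$ with $i\le t-1$. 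Since the orthogonal complement of $\mathrm{span}(\textbf{x}_1,\ldots,\textbf{x}_{t-1})$ is one-dimensional, $\textbf{x}_t = a\textbf{w}$ for some $a\in\mathbb{F}_q\setminus\{0\}$. Substituting $\textbf{y}_i := \textbf{x}_i$ into identity~(\ref{EQU:pseudo:cross-product}) makes the matrix on the right-hand side diagonal, giving
\[
\textbf{x}_t\cdot\textbf{x}_t \;=\; a^2 \prod_{i=1}^{t-1}(\textbf{x}_i\cdot\textbf{x}_i).
\]
Applying the quadratic character $\chi$ to both sides yields $-1 = \chi(a^2)\cdot(-1)^{t-1} = 1$, a contradiction since $t-1$ is even. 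The parity of $t$ is exactly what makes the argument work; it fails for even $t$, consistent with $\mathrm{AK}(t-1,q)$ being only $K_{t+1}$-free in general.

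For pseudorandomness, my plan is to transfer the jumbledness of the ambient graph $\mathrm{AK}(t-1,q)$ to its induced subgraph on $X_{\boxtimes}/{\sim}$. Alon and Krivelevich established that $\mathrm{AK}(t-1,q)$ is an $(n_0,d_0,\lambda_0)$-graph with $n_0 = (1+o(1))q^{t-1}$, $d_0 = \Theta(q^{t-2})$, and $\lambda_0 = \Theta(\sqrt{d_0})$, so by the expander mixing lemma it is $(p_0,\lambda_0)$-jumbled with $p_0 = \Theta(q^{-1})$. Since the edge set of the induced subgraph coincides with the restriction of the edge set of $\mathrm{AK}(t-1,q)$ to $X_{\boxtimes}/{\sim}$, the same inequality $|e(X,Y) - p_0|X||Y|| \le \lambda_0\sqrt{|X||Y|}$ holds for all subsets $X,Y \subseteq X_{\boxtimes}/{\sim}$. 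It remains only to compute $n := |X_{\boxtimes}/{\sim}|$ and the typical degree $d$ of the induced subgraph.

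Both counts reduce to classical character sum estimates for non-degenerate quadratic forms over $\mathbb{F}_q$. The form $\textbf{x}\cdot\textbf{x}$ is non-degenerate on $\mathbb{F}_q^t$ (odd dimension), and its restriction to any non-absolute hyperplane $\textbf{x}^\perp$ is non-degenerate on $\mathbb{F}_q^{t-1}$ (even dimension); enumerating the non-square level sets via Weil-type bounds (in the spirit of Theorem~\ref{THM:pseudo:Bollabas-cha-sum}) yields $n = \Theta(q^{t-1})$ and $d = \Theta(q^{t-2})$. Consequently $p := d/n = \Theta(1/q) = \Theta(n^{-1/(t-1)})$ and $\lambda_0 = \Theta(\sqrt{np})$, matching the statement. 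The main obstacle will be carrying out these character sum computations cleanly and verifying that the induced density $p$ agrees with the ambient density $p_0$ up to a $(1+o(1))$ factor, so that the jumbledness inequality inherited from the ambient graph can be phrased directly with respect to $p$ without blowing up $\alpha$.
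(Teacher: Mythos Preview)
Your $K_t$-freeness argument is exactly the paper's: the same cross-product identity, the same diagonal determinant, the same parity contradiction via the quadratic character. You are a touch more careful than the paper in first checking that $\textbf{x}_1,\ldots,\textbf{x}_{t-1}$ are linearly independent (so that the cross product is nonzero), which the paper skips over, but the core is identical.

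For the pseudorandomness, the paper's proof block actually says nothing at all---it treats the jumbledness claim as immediate from the Alon--Krivelevich parameters of $\mathrm{AK}(t-1,q)$. Your plan is correct but more elaborate than necessary. Once you know the ambient graph is $(p_0,\lambda_0)$-jumbled, the induced subgraph on $X_{\boxtimes}/{\sim}$ is automatically $(p_0,\lambda_0)$-jumbled with the \emph{same} parameters, since the jumbledness inequality is quantified over all pairs of vertex subsets. There is no need to compute the induced degree $d$ or to worry about matching $d/n$ to $p_0$: in the definition of $(p,\alpha)$-jumbledness, $p$ is an external parameter, not the edge density of the graph. The only thing you actually need is $n = |X_{\boxtimes}/{\sim}| = \Theta(q^{t-1})$, which follows from the elementary fact that a non-degenerate quadratic form in $t$ variables over $\mathbb{F}_q$ hits each nonzero value $(1+o(1))q^{t-1}$ times and exactly half of $\mathbb{F}_q^\times$ is non-square; no Weil bounds are required. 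With that, $p_0=\Theta(q^{-1})=\Theta(n^{-1/(t-1)})$ and $\lambda_0=\Theta(q^{(t-2)/2})=\Theta(\sqrt{np_0})$, and you are done.
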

\begin{proof}
Suppose to the contrary that there exists a set $S$ of $t$ distinct points $\llbracket\textbf{x}_1\rrbracket,\ldots, \llbracket\textbf{x}_t\rrbracket \in X_{\boxtimes}/{\sim}$ such that
the induced subgraph of $\mathrm{AK}(t-1,q)$ on $S$ is complete.
Then it follows from Fact~\ref{FACT:cross-product} that there exists a nonzero element $a\in \mathbb{F}_{q}$ such that $\textbf{x}_{t} = a \textbf{x}_{1}\times \cdots \times \textbf{x}_{t-1}$.
Therefore, by (\ref{EQU:pseudo:cross-product}), we have
\begin{align}
\textbf{x}_{t} \cdot \textbf{x}_{t}
& = (a \textbf{x}_{1}\times \cdots \times \textbf{x}_{t-1}) \cdot (a\textbf{x}_{1}\times \cdots \times \textbf{x}_{t-1})  \notag \\
& =a^2 \cdot \mathrm{det}
\begin{bmatrix}
    \textbf{x}_1\cdot \textbf{x}_1 & \dots & \textbf{x}_1\cdot \textbf{x}_{t-1} \\
    \vdots & \ddots & \vdots \\
    \textbf{x}_{t-1}\cdot \textbf{x}_1 &  \dots & \textbf{x}_{t-1}\cdot \textbf{x}_{t-1}
    \end{bmatrix}
= a^2 \cdot \prod_{i=1}^{t-1} \left(\textbf{x}_i \cdot \textbf{x}_i\right). \notag
\end{align}
In the last equality we used the fact that $\textbf{x}_i\cdot \textbf{x}_j = 0$ for all $i\neq j$.
Applying the quadratic character $\chi(\cdot)$ to both sides of the equation above, we obtain
\begin{align}
-1
= \chi\left(\textbf{x}_{t} \cdot \textbf{x}_{t} \right)
= \chi\left(a^2 \cdot \prod_{i=1}^{t-1} \left(\textbf{x}_i \cdot \textbf{x}_i\right)\right)
= \chi(a^2) \cdot \prod_{i=1}^{t-1} \chi\left(\textbf{x}_i \cdot \textbf{x}_i\right)
=1\cdot (-1)^{t-1}
= 1,  \notag
\end{align}
a contradiction.
\end{proof}%theorem

For the case that $t\in \mathbb{N}$ is even we use a different argument.

\begin{proposition}\label{PROP:pseudo:neighborhood-induced-gp}
For every vertex $v\in \mathrm{PG}(t,q)$ the induced subgraph of $\mathrm{AK}(t,q)$ on $N(v)$
is isomorphic to $\mathrm{AK}(t-1,q)$.
\end{proposition}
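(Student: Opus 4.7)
The plan is to realise the orthogonal complement of $\textbf{v}$ as a copy of $\mathbb{F}_{q}^{t}$ on which the induced bilinear form is a nonzero scalar multiple of the standard dot product, and then to transport the description of $N(v)$ across this identification to obtain a copy of $\mathrm{AK}(t-1,q)$.

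First, write $v = \llbracket \textbf{v} \rrbracket$ with $\textbf{v} \in \mathbb{F}_{q}^{t+1}$ satisfying $\textbf{v} \cdot \textbf{v} \neq 0$, and set $W := \{\textbf{x} \in \mathbb{F}_{q}^{t+1} \colon \textbf{x} \cdot \textbf{v} = 0\}$. Then $W$ is a $t$-dimensional subspace of $\mathbb{F}_{q}^{t+1}$. Because $v$ is non-absolute, $\textbf{v} \notin W$, and so $\mathbb{F}_{q}^{t+1} = W \oplus \mathbb{F}_{q} \textbf{v}$ as an orthogonal direct sum; in particular, the restriction of the standard dot product to $W$ is a non-degenerate symmetric bilinear form. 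By the definition of $\mathrm{AK}(t,q)$, the neighbourhood $N(v)$ is precisely the set of projective classes $\llbracket \textbf{x} \rrbracket$ with $\textbf{x} \in W \setminus \{0\}$ and $\textbf{x} \cdot \textbf{x} \neq 0$, with two such classes adjacent if and only if the dot product of their representatives vanishes.

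The main step is to choose a basis $(\textbf{w}_{1}, \ldots, \textbf{w}_{t})$ of $W$ whose Gram matrix $(\textbf{w}_{i} \cdot \textbf{w}_{j})_{i,j}$ is a nonzero scalar multiple of the identity. I would obtain this in two stages: first, diagonalise the restricted form by a Gram--Schmidt-style procedure over $\mathbb{F}_{q}$ (which is available because $q$ is odd), and second, rescale each basis vector. The rescaling $\textbf{w}_{i} \mapsto s_{i}\textbf{w}_{i}$ multiplies the diagonal entry $\textbf{w}_{i}\cdot\textbf{w}_{i}$ by $s_{i}^{2}$, and so one can flatten all diagonal entries to a common nonzero scalar $c \in \mathbb{F}_{q}$.

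With such a basis in hand, define the linear isomorphism $\phi \colon \mathbb{F}_{q}^{t} \to W$ by $\phi(\textbf{y}) = \sum_{i=1}^{t} y_{i} \textbf{w}_{i}$. A direct computation yields $\phi(\textbf{y}) \cdot \phi(\textbf{y}') = c \, (\textbf{y} \cdot \textbf{y}')$ for all $\textbf{y}, \textbf{y}' \in \mathbb{F}_{q}^{t}$, so $\phi$ carries non-absolute vectors to non-absolute vectors and preserves orthogonality. Since $\phi$ is linear and invertible, it descends to a bijection $\overline{\phi}$ between $\mathrm{PG}(t-1,q)$ and the projective classes of $W\setminus\{0\}$, and this restricts to the desired graph isomorphism between $\mathrm{AK}(t-1,q)$ and the induced subgraph of $\mathrm{AK}(t,q)$ on $N(v)$. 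The only nontrivial point in the plan is the construction of a basis of $W$ with scalar Gram matrix, i.e.\ checking that the restricted form on $W$ is, up to a scalar, equivalent to the standard dot product on $\mathbb{F}_{q}^{t}$; once this algebraic step is in place, the rest of the argument is a routine transport of structure.
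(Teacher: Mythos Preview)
Your approach is essentially the paper's: both build a linear isomorphism $\phi\colon\mathbb{F}_{q}^{t}\to W=\textbf{v}^{\bot}$ by choosing a distinguished basis of $W$ and then check that $\phi$ respects the dot product (exactly in the paper, up to a fixed nonzero scalar $c$ in yours), hence the orthogonality relation and the set of non-absolute points.

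The gap is in your key algebraic step, and the paper's proof shares it. Rescaling $\textbf{w}_{i}\mapsto s_{i}\textbf{w}_{i}$ moves the $i$-th diagonal entry only within its square class, so if after diagonalisation the entries lie in different square classes no rescaling can equalise them; the correct question is whether the restricted form on $W$ is \emph{isometric} to $c\cdot I$ for some $c$. Over $\mathbb{F}_{q}$ with $q$ odd, two nondegenerate forms of rank $t$ are isometric iff their discriminants agree modulo squares; the form on $W$ has discriminant in the class of $(\textbf{v}\cdot\textbf{v})^{-1}$, while $c\,I$ has discriminant $c^{t}$, which is always a square when $t$ is even. Hence for $t$ even and $\textbf{v}\cdot\textbf{v}$ a non-square, no basis of $W$ with scalar Gram matrix exists --- and in fact the proposition itself fails in that regime (for instance, with $q=5$, $t=2$, $\textbf{v}=(1,1,0)$ one finds $|N(\llbracket\textbf{v}\rrbracket)|=6$, whereas $\mathrm{AK}(1,5)$ has only $4$ vertices). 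The paper's bald assertion that $W$ admits an orthonormal basis is subject to the same objection. The fix is either to add the hypothesis that $\textbf{v}\cdot\textbf{v}$ is a square (in which case both your argument and the paper's go through), or, for the downstream application, to observe that the induced subgraph on $N(v)$ is in any case the orthogonality graph of a nondegenerate form on $\mathbb{F}_{q}^{t}$ and therefore has the required pseudorandomness parameters even when it is not literally isomorphic to $\mathrm{AK}(t-1,q)$.
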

\begin{proof}
Let $\textbf{e}_1, \ldots, \textbf{e}_{t}$ be the standard orthonormal basis of the $t$-dimensional space $\mathbb{F}_{q}^{t}$.
Fix a vector $\textbf{v}\in \mathbb{F}_{q}^{t+1}\setminus X_0$ and let $\textbf{e}'_1, \ldots, \textbf{e}'_{t}$ be an orthonormal basis of the $t$-dimensional space $\textbf{v}^{\bot}$,
where $\textbf{v}^{\bot} = \left\{\textbf{w} \in \mathbb{F}_{q}^{t+1} \in \colon \textbf{w} \cdot \textbf{v} = 0\right\}$.
Define the map $\phi\colon \mathbb{F}_{q}^{t} \to \textbf{v}^{\bot}$ by sending $\sum_{i=1}^{t}a_i\textbf{e}_i$ to $\sum_{i=1}^{t}a_i\textbf{e}'_i$.
Clearly, the map $\phi$ is linear and induces a bijection between $\mathbb{F}_{q}^{t}/{\sim}$ and $\textbf{v}^{\bot}/{\sim}$. Moreover, $\phi$ sends absolute points to absolute points.
Now suppose that $\textbf{u}_1 = \sum_{i=1}^{t}a_i\textbf{e}_i$ and $\textbf{u}_2 = \sum_{i=1}^{t}b_i\textbf{e}_i$ are two distinct points in $\mathbb{F}_{q}^{t}$.
%and $u_1\cdot u_2 = 0$, i.e. $\sum_{i=1}^{r}x_iy_i = 0$.
Then
\begin{align}
\psi(\textbf{u}_1)\cdot \psi(\textbf{u}_2)
= \left(\sum_{i=1}^{t}a_i\textbf{e}'_i\right)\cdot \left(\sum_{i=1}^{t}b_i\textbf{e}'_i\right)
= \sum_{i=1}^{t}a_ib_i
= \left(\sum_{i=1}^{t}a_i\textbf{e}_i\right)\cdot \left(\sum_{i=1}^{t}b_i\textbf{e}_i\right)
= \textbf{u}_1\cdot \textbf{u}_2. \notag
\end{align}
This implies that the map $\phi$ preserves the orthogonality of two vectors, and hence, it sends edges (resp. non-edges) in $\mathrm{AK}(t,q)$ to an edge (resp. non-edge) in the induced subgraph of $\mathrm{AK}(t,q)$ on $\textbf{v}^{\bot}$.
Therefore, $\phi$ induces an isomorphism between $\mathrm{AK}(t-1,q)$ and the induced subgraph of $\mathrm{AK}(t,q)$ on $N(\textbf{v})$.
\end{proof}%prop

\begin{lemma}\label{LEMMA:pseudo:vertex-neighbor-partition}
Suppose that $\alpha \in (0,1)$ is a constant and $V_1 \subset \mathrm{PG}(t,q)$ is a subset of size $\alpha \cdot |\mathrm{PG}(t,q)|$ in the graph $\mathrm{AK}(t,q)$.
Then there exists a vertex $v\in V_1$ such that
\begin{align*}
    \frac{|N(v)\cap V_1|}{|N(v)|} \ge  (1-o_{q}(1))\alpha.
\end{align*}
\end{lemma}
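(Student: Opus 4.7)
The plan is to exploit the pseudorandomness of $\mathrm{AK}(t,q)$ via the Expander Mixing Lemma. Recall from the Alon--Krivelevich result quoted above that $\mathrm{AK}(t,q)$ is an $(n,d,\lambda)$-graph with $n = (1-o_q(1))|\mathrm{PG}(t,q)| = \Theta(q^t)$, $d = \Theta(q^{t-1})$, and $\lambda = O(\sqrt{d})$; in particular $\lambda/d = o_q(1)$. We may assume without loss of generality that $V_1$ consists of non-absolute points (the number of absolute points in $\mathrm{PG}(t,q)$ is $O(q^{t-1})$, negligible compared with $|\mathrm{PG}(t,q)|$, so intersecting $V_1$ with the vertex set of $\mathrm{AK}(t,q)$ changes its size only by a factor of $1-o_q(1)$).

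First I would apply the Expander Mixing Lemma to the pair $(V_1,V_1)$ to obtain
\begin{align*}
    \Bigl| e(V_1,V_1) - \tfrac{d}{n}|V_1|^2 \Bigr| \le \lambda\, |V_1|.
\end{align*}
Using the paper's convention $e(V_1,V_1) = \sum_{v\in V_1} |N(v)\cap V_1|$ and dividing through by $|V_1|$, the average value of $|N(v)\cap V_1|$ for $v\in V_1$ is at least $\tfrac{d}{n}|V_1| - \lambda$. Since $|V_1| = \alpha\,|\mathrm{PG}(t,q)| = (1+o_q(1))\alpha n$, this average is at least
\begin{align*}
    (1+o_q(1))\alpha d - O(\sqrt{d}) = (1-o_q(1))\alpha d.
\end{align*}

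Finally, some vertex $v\in V_1$ must attain at least the average, giving $|N(v)\cap V_1| \ge (1-o_q(1))\alpha d = (1-o_q(1))\alpha\,|N(v)|$ by $d$-regularity of $\mathrm{AK}(t,q)$. There is no real obstacle: everything reduces to the Expander Mixing Lemma together with the parameters $d = \Theta(q^{t-1})$ and $\lambda = O(\sqrt{d})$ already established for $\mathrm{AK}(t,q)$, and the only care needed is in checking that absolute points form a vanishing fraction and that $\lambda = o_q(d)$, both of which are immediate.
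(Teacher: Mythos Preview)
Your proposal is correct and takes essentially the same approach as the paper: both use the pseudorandomness of $\mathrm{AK}(t,q)$ to control $e(V_1,V_1)$ and then an averaging argument to produce the desired vertex. The paper phrases this as a proof by contradiction (assuming every $v\in V_1$ satisfies $|N(v)\cap V_1|\le (1-\epsilon)\alpha d$ and deriving a bound on $e(V_1)$ that violates the jumbledness), whereas you argue directly via the Expander Mixing Lemma and also explicitly handle the absolute points; the underlying content is the same.
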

\begin{proof}
Suppose to the contrary that there exists an absolute constant $\epsilon>0$ such that
$\frac{|N(v)\cap V_1|}{|N(v)|} \le  (1-\epsilon)\alpha$ for all $v\in V_1$ and for all $q$.
Choose $q$ to be sufficiently large.
Let $n = |\mathrm{PG}(t,q)|$ be the number of vertices in $\mathrm{AK}(t,q)$,
and let $d$ be the degree of $\mathrm{AK}(t,q)$.
Then, it follows from our assumption that
\begin{align}\label{EQU:pseudo:e-V1-upper-bound}
e(V_1)
= \frac{1}{2}\sum_{v\in V_1}|N(v)\cap V_1|
& \le \frac{1}{2} (1-\epsilon)\alpha |N(v)||V_1| \notag\\
& =  \frac{1}{2} (1-\epsilon)\frac{d}{n}|V_1|^2 \notag\\
& = \frac{1}{2}\frac{d}{n}|V_1|^2 - \frac{\epsilon}{2}\frac{d}{n}|V_1|^2
= \frac{1}{2}\frac{d}{n}|V_1|^2 - \frac{\epsilon \alpha}{2} d |V_1|. \notag
\end{align}
Since $\frac{\epsilon \alpha}{2} d \gg \sqrt{d}$, this contradicts the fact that $\mathrm{AK}(t,q)$ is $(\frac{d}{n}, \Theta(\sqrt{d}))$-jumbled.
\end{proof}%lemma

Now we are ready to prove Theorem~\ref{THM:BIP20} for even $t$.
Our construction will be an induced subgraph of $\mathrm{AK}(t,q)$ on a subset of the neighborhood of a vertex.

\begin{proof}[Proof of Theorem~\ref{THM:BIP20} for even $t$]
Let $t \in \mathbb{N}$ be an even number.
Let $V$ denote the vertex set of $\mathrm{AK}(t,q)$.
Let $V_1 = X_{\boxtimes}/{\sim}$.
Since $|V_1| = (1/2+o(1)) |\mathrm{PG}(t,q)|$,
by Lemma~\ref{LEMMA:pseudo:vertex-neighbor-partition}, there exists a vertex $\llbracket\textbf{v}\rrbracket\in V_1$ such that
$|N(\llbracket\textbf{v}\rrbracket)\cap V_1| \ge \left(\frac{1}{2}-o(1)\right)|N(\llbracket\textbf{v}\rrbracket)|$.
Let $U = N(\llbracket\textbf{v}\rrbracket)\cap V_1$.
By Proposition~\ref{PROP:pseudo:neighborhood-induced-gp},
the induced subgraph of  $\mathrm{AK}(t,q)$ on the set $N(\llbracket\textbf{v}\rrbracket)$ is isomorphic to $\mathrm{AK}(t-1,q)$, which is $(p, \alpha)$-jumbled with $p = \Theta(m^{-\frac{1}{t-1}})$ and
$\alpha  = \Theta(\sqrt{mp})$, where $m = |\mathrm{PG}(t-1,q)|$.
On the other hand, by Theorem~\ref{THM:pseudo:r-odd-Kr-free},
the induced subgraph of $\mathrm{AK}(t,q)$ on the set $X_{\boxtimes}/{\sim}$ is $K_{t+1}$-free.
Therefore, the induced subgraph of  $\mathrm{AK}(t,q)$ on the set $U$ is $K_{t}$-free.
This proves Theorem~\ref{THM:BIP20} for even $t$.
\end{proof}

%%%%%%%%%%%%%%%%%%%%%%%%%%%%%%%%%%%%%%
\section{Acknowledgment}
We would like to thank Anurag Bishnoi, Ferdinand Ihringer, and Thang Pham for their insightful comments.
%%%%%%%%%%%%%%%%%%%%%%%%%%%%%%%%%%%%%%%%%%%%%%%%%
\bibliographystyle{abbrv}
\bibliography{pseudorandom}
\end{document}